\begin{document}

%

%
\renewcommand*{\thefootnote}{\fnsymbol{footnote}}
\twocolumn[

\aistatstitle{Convergence Analysis of Block Coordinate Algorithms with Determinantal Sampling}
\aistatsauthor{ Mojm\'ir  Mutn\'y\footnote[2]{Equal contribution.}   \And Micha\l{}  Derezi\'nski\footnotemark[2]  \And Andreas Krause  }
\aistatsaddress{  Department of Computer Science\\
	ETH Zurich, Switzerland \\ \texttt{mmutny@inf.ethz.ch} \And Department of Statistics \\ University of California, Berkeley \\ \texttt{mderezin@berkeley.edu} \And Department of Computer Science\\ 
	ETH Zurich, Switzerland \\ \texttt{krausea@inf.ethz.ch} } 
]

\begin{abstract}
	We analyze the convergence rate of the randomized Newton-like method
        introduced by \cite{Qu2015Feb} for smooth and convex
        objectives, which uses random coordinate
        blocks of a Hessian-over-approximation matrix
        $\bM$ instead of the true Hessian. The convergence analysis of the algorithm is  
        challenging because of its complex dependence on the structure
        of $\bM$. However, we show that when the coordinate blocks are
        sampled with probability 
        proportional to their determinant, the convergence rate depends solely on the eigenvalue distribution of matrix $\bM$, and has an analytically tractable form. To 
        do so, we derive a fundamental new expectation formula for
        determinantal point processes. We show that determinantal
        sampling allows us to reason about the optimal subset size of
        blocks in terms of the spectrum of $\bM$. Additionally, we
        provide a        numerical evaluation of our analysis,
        demonstrating cases where determinantal sampling is superior
        or on par with uniform sampling.
\end{abstract}

	\section{INTRODUCTION}

We study unconstrained optimization of the form:
	\[	\min_{x \in \mR^d} f(x), 	\]
	where we assume that the function $f:\mR^d \rightarrow \mR$ is smooth, convex, and potentially high dimensional. This problem commonly arises in empirical risk minimization \citep[ERM, see][]{Shalev-Shwartz2014}.
State-of-the-art approaches for minimization of convex ERM objectives with large numbers of data points include variants of stochastic gradient descent (SGD) such as SVRG \citep{Johnson2013}, SARAH \citep{Nguyen2017} and a plethora of others. Alternatively, one can approach the ERM problem via a dual formulation, where fast coordinate minimization techniques such as SDCA \citep{Shalev-Shwartz2013}, or parallel coordinate descent \citep{PCDM,Richtarik2015a} can be applied. This is especially desirable in distributed and parallel environments \citep{HYDRA,Ma2015,Duenner2016}. These approaches are closely related to methods that subsample the Hessian \citep{Pilanci2015, subsampled-newton, Roosta-Khorasani2016}.
	
We study a block coordinate descent algorithm
first introduced by \cite{Qu2015Feb}. In each iteration of this algorithm, we sample a block of coordinates and then solve a Newton step on the chosen coordinate subspace.
However, in place of the true Hessian, a fixed over-approximation matrix $\bM$ is used for the sake of efficiency. The Newton step is computed on a sparsified version of this matrix with all but the selected coordinates set to zero, denoted $\bM_{\hat{S}}$ (see Section \ref{s:notation} for the complete notation).
Originally, \cite{Qu2015Feb} called this method Stochastic Dual Newton Ascent (SDNA), appealing to the fact that it operates in a dual ERM formulation. Later, it was also called a Stochastic Newton method \citep{Mutny2018a}, while we use the name \emph{Randomized Newton Method} (RNM) following \cite{Gower2019}\footnote{\citet{Gower2019} consider a more general algorithm, relying on the novel assumptions of \emph{relative} smoothness and convexity. We discuss this setting in Appendix \ref{appendix:relative}.}.


	The sampling strategy for the coordinate blocks has a dramatic impact  on  the convergence rate \citep{Qu2016}. \cite{Gower2015} demonstrate that by optimizing the sampling probabilities one can obtain very significant speedups, however this optimization is a semidefinite program which may be even more challenging than the original optimization problem itself. Even when using a basic sampling strategy (such as uniform), the convergence analysis of RNM is challenging because it hinges on deriving the \emph{expected pseudoinverse} of $\bM_{\hat{S}}$, henceforth denoted  $\mE[(\bM_{\hat{S}})^+]$. Prior to this work, no simple closed form expression was  known for this quantity.

To overcome this challenge, we focus on a strategy of randomly sampling  blocks of coordinates proportionally to the determinant of the corresponding submatrix of $\bM$, which we call \emph{determinantal sampling}. Similar sampling schemes have been  analyzed in the context of stochastic optimization before \citep{dpp-minibatch, Borsos2019}. Recently, \citet{Rodomanov2019} analyzed determinantal sampling for randomized block coordinate descent, however they imposed cardinality constraints on the block size, and as a result were unable to obtain a simple expression for $\mE[(\bM_{\hat{S}})^+]$.

We use determinantal sampling with randomized block size, which allows us to obtain a simple closed form expression for the expected pseudoinverse:
  \[ \mE[(\bM_{\hat{S}})^+] = (\alpha\bI + \bM)^{-1}, \]
where $\alpha$ is a tunable parameter that is used to control the expected block size. 
With the use of this new expectation formula, we establish novel bounds on the convergence rate of RNM depending on the spectral properties of the over-approximating matrix $\bM$. For many instances of the problem, the matrix coincides with the data covariance, and spectral decays of such covariances are  well understood \citep{Blanchard2007}. This allows us to predict the decay-specific behavior of RNM with determinantal sampling and recommend the optimal block size. 
		
The cost of each iteration of RNM scales cubically with the size of the block due to matrix inversion. \citet{Qu2015Feb} demonstrate numerically that for small blocks the optimization time decreases but at some point it starts to increase again. They surmise that the improvement is obtained only as long as the inversion cost is dominated by
the other fixed per-iteration costs such as fetching from memory. However, whether the only possible speedup stems from this has remained unclear. We answer this question for determinantal sampling by deriving the optimal subset size in the case of kernel ridge regression. We show that when the eigenvalue decay is sufficiently rapid, then the gain in convergence rate can dominate the cost of inversion even for larger block sizes. 
	
		
\subsection{Contributions}
The main contributions of this paper can be summarized as follows:
	\begin{itemize}
        \item We obtain a novel and remarkably simple expectation formula for determinantal sampling that allows us to derive a simple and closed form expression for the convergence rate of the Randomized Newton Method. 
		
		\item This allows us to improve the previous bounds on the theoretical speedup of using coordinate blocks of larger sizes. For example, we show that in the case of kernel regression with a covariance operator that has exponentially decreasing spectrum, the theoretical speedup is \emph{exponential}. 
		
		\item We take into account the actual per iteration cost, and analyze not only the convergence rate of the algorithm, but also its numerical effort to solve a problem up to some relative precision. This allows us to classify the problems into categories where the optimal block size is one, the full matrix, or somewhere in between.
		
		\item We numerically validate the discovered theoretical properties of \emph{determinantal sampling}, and demonstrate cases when it improves over  uniform sampling, and when it performs similarly. If the two perform similarity, our analysis serves as a more interpretable proxy for the convergence analysis of uniform sampling.
		
	\end{itemize}
	
	\subsection{Notation}
        \label{s:notation}
	
	Let $S$ be a non-empty subset of  $[d]:=\{1,2, \dots, d\}$. We let $\bI_{:S}$ be the $d \times |S|$ matrix  composed of columns $i \in S$ of the $d\times d$ identity matrix $\bI$. Note that $\bI_{:S}^\top \bI_{:S}$ is the $|S|\times |S|$ identity matrix.	
	Given an invertible matrix $\bM\in \mR^{d\times d}$, we can extract its principal  $|S| \times |S|$ sub-matrix  with the corresponding rows and columns indexed by $S$ via \( \mathbf{M}_{SS} \eqdef \bI_{:S}^\top \mathbf{M} \bI_{:S}\),
	and additionally keeping the sub-matrix in its canonical place we can define the following operation,
	\begin{equation}\label{eq:slice}
	\bM_S \eqdef \bI_{:S}\bM_{SS} \bI_{:S}^\top.
	\end{equation}	
	Note that $\mathbf{M}_{S}$ is the $n\times n$ matrix obtained from $\bM$ by retaining elements $\bM_{ij}$ for $i \in S$ and $j \in S$; and all the other elements set to zero. By $(\cdot)^+$ we denote the Moore-Penrose pseudoinverse. The matrix $(\bM_{S})^{+}$ can be calculated by inverting $\bM_{SS} \in \mR^{|S|\times |S|}$, and then placing it back into the $d \times d$ matrix.



\section{ALGORITHM}\label{sec:algo}
        The key assumption that motivates RNM is a smoothness condition
that        goes beyond the standard assumptions in the optimization literature, where smoothness would be characterized by a symmetric quadratic with the radius $L$. Instead, Assumption \ref{ass:smooth} below is tighter, allowing for more refined analysis, and can be related to the standard assumption by $L = \lambda_{\max}(\bM)$.   
	
		\begin{assumption}[Smoothness]\label{ass:smooth}
		There exists a symmetric p.d.~matrix $\mathbf{M} \in \mathbb{R}^{d \times d}$ such that $\forall x,h \in \mathbb{R}^d$,
		\begin{equation}\label{eq:smooth}
			f(x+h)\leq f(x)+\braket{\nabla f(x),h}+\frac{1}{2}\braket{h,\mathbf{M}h}.
		\end{equation}
	\end{assumption}
	
%
	This assumption is satisfied for quadratic problems such as ridge regression with squared loss,
		\(
		y = \bA^\top w + \epsilon,
		\)
		where $\bA \in \mR^{n\times d}$ is the data matrix, and $y$ is the vector of responses, which is corrupted via the noise $\epsilon \in \mR^n$.
	  In this case, Assumption \ref{ass:smooth} holds with $\bM$ being the offset covariance matrix $\bA^\top \bA + \lambda\bI $, where $\lambda$ is the regularization parameter. Beyond quadratic problems, it holds for many common problems such as logistic regression, where $\bM = \frac{1}{4} \bA^\top \bA$. Section~\ref{sec:ridge} provides examples in the dual formulation. 

\subsection{Randomized Newton Method}
		Let $k$ be the iteration count and $x_0$ be the initial point. The Randomized Newton Method algorithm is defined via the following update rule:
		\begin{equation}\label{eq:update}
			x_{k+1} = x_k - \left(\bM_{S_k}\right)^+\nabla f(x_k),
		\end{equation}
		where $S_k \subseteq [d]$ is a subset of coordinates chosen at iteration $k$ from random sampling $\hat{S}$ to be defined. Notice that since $\bM_{S_k}$ is a sparse $d \times d$ matrix with only a $|S_k| \times |S_k|$ principal submatrix that is non-zero, its inversion costs $\mO(|S_k|^3)$ arithmetic operations. Moreover, only  $|S_k|$ elements of $\nabla f(x_k)$ are needed for the update.
Note that if $|S_k|=1$ then we are in the classical case of coordinate descent, while if $S=[d]$, then we are performing a Newton step (with $\bM$ in place of the true Hessian). 
		
%
%
%
	\subsection{Sampling}
        The strategy with which one chooses blocks $S_k \subseteq [d]$ in \eqref{eq:update} is of great importance and it influences the algorithm significantly.
This strategy, called a \emph{sampling} and denoted $\hat{S}$, is a random set-valued mapping with values being subsets of $[d]$. A \emph{proper sampling} is such that $p_i \eqdef  \pP(i \in \hat{S}) > 0$ for all $i$.
		
The most popular are \emph{uniform} samplings, i.e., those for which the marginal probabilities are equal:
		\[ \pP(i \in \hat{S}) = \pP(j \in \hat{S}) \quad \forall i,j \in [d]. \]
This class includes $\tau$-nice and $\tau$-list samplings \citep{Qu2016}. The $\tau$-nice sampling considers all elements of a power set of $[d]$ with a fixed cardinality s.t.~$|\hat{S}| = \tau$. There are $\binom{d}{\tau}$ of such subsets and  each of them is equally probable. Consequently, the probability $\pP(i \in \hat{S}) = \frac{\tau}{n}$. On the other hand, the $\tau$-list sampling is restricted to ordered and consecutive subsets of the power set, with cardinality fixed to $\tau$. 
		
Data dependent (and potentially non-uniform) samplings, which sample according to the diagonal  elements of $\bM$, have been analyzed in the context of coordinate descent \citep{Qu2016,Allen-Zhu2016,Hanzely2018,Richtarik2015a}.

\section{DETERMINANTAL SAMPLING}\label{sec:dpp}
Our proposed sampling for the Randomized Newton Method is based
on a class of distributions called {\em Determinantal Point Processes
(DPPs)}. Originally proposed by \cite{dpp-physics}, DPPs have found numerous
applications in machine learning \citep{dpp-ml} as well as optimization
\citep{dpp-minibatch,Borsos2019}, for their variance reduction properties and the ability to produce diverse samples.
\begin{definition}
  For a $d\times d$ p.s.d.~matrix $\bM$, we define $\DPP(\bM)$ as a distribution over all subsets $S\subseteq [d]$, so
  that 
  \begin{equation}\label{eq:sample_def}
  \pP(S)\propto \det(\bM_{SS}).
  \end{equation}
\end{definition}
Even though this is a combinatorial distribution, the
normalization constant can be computed exactly. We state this well
known fact (e.g., see \cite{dpp-ml}) separately because it is crucial
for proving our main result.
\begin{lemma}[Normalization]\label{l:normalization}
  For a $d\times d$ matrix $\bM$,
  \begin{align*}
    \sum_{S\subseteq [d]}\det(\bM_{SS}) = \det(\bI+\bM).
  \end{align*}
\end{lemma}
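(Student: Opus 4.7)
The plan is to establish the identity by exploiting the multilinearity of the determinant as a function of the columns of $\bI+\bM$, and then simplifying each resulting term by an iterated Laplace expansion. This avoids invoking the spectral theorem and works for any square $\bM$, not just p.s.d.~ones.

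First, I would write the $j$-th column of $\bI+\bM$ as $e_j + m_j$, where $e_j$ denotes the $j$-th standard basis vector and $m_j$ is the $j$-th column of $\bM$. Multilinearity in each of the $d$ columns then expands $\det(\bI+\bM)$ into a sum of $2^d$ determinants, one for each choice of selecting $e_j$ or $m_j$ in each column. Indexing the choice by the subset $S\subseteq[d]$ of coordinates where $m_j$ is selected gives
\begin{equation*}
\det(\bI+\bM) \;=\; \sum_{S\subseteq[d]} \det\bigl(N^{(S)}\bigr),
\end{equation*}
where $N^{(S)}$ is the $d\times d$ matrix whose $j$-th column equals $m_j$ for $j\in S$ and $e_j$ for $j\notin S$.

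The next step is to show that $\det(N^{(S)})=\det(\bM_{SS})$. For each $j\notin S$, the $j$-th column of $N^{(S)}$ is $e_j$, whose single nonzero entry (equal to $1$) lies in row $j$. Expanding the determinant along this column eliminates row $j$ and column $j$ with a sign of $(-1)^{j+j}=+1$, producing the determinant of the corresponding submatrix. Iterating this elimination over every $j\notin S$ removes precisely the rows and columns outside $S$, and what remains is the principal submatrix $\bM_{SS}$: the columns $m_j$ for $j\in S$, restricted to rows in $S$, are exactly the columns of $\bM_{SS}$. Summing over $S$ yields the lemma (with the convention $\det(\bM_{\emptyset\emptyset})=1$ contributing the constant term).

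The only subtlety, which I expect to be the main (though quite minor) obstacle, is the sign bookkeeping in the iterated Laplace expansion: one must check that no sign is introduced at any step and that the order of elimination does not matter. This is automatic because each expansion is performed along a column whose unique nonzero entry is on the diagonal of the \emph{current} submatrix, so the cofactor sign is always $+1$ regardless of the order in which the indices $j\notin S$ are eliminated. An entirely parallel proof via the characteristic polynomial, writing $\det(\bI+\bM)=\prod_i(1+\lambda_i)=\sum_k e_k(\lambda_1,\ldots,\lambda_d)$ and invoking the classical identification of $e_k$ with the sum of $k\times k$ principal minors, is available but ultimately rests on the same multilinearity argument.
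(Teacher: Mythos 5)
Your proof is correct and complete. Note that the paper itself does not prove this lemma at all -- it is stated as a well-known fact with a citation to the DPP literature -- so there is no in-paper argument to compare against. Your multilinearity expansion of $\det(\bI+\bM)$ over the $2^d$ column choices, followed by the iterated cofactor expansion along the columns $e_j$ for $j\notin S$, is the standard proof of this classical identity, and your handling of the one genuine subtlety (that every cofactor sign is $+1$ because the eliminated entry always sits on the diagonal of the current submatrix) is right. The alternative you mention via $\det(\bI+\bM)=\prod_i(1+\lambda_i)$ and elementary symmetric polynomials is also valid, and has the minor advantage of making the connection to the spectrum explicit, which is the form the paper later exploits in Lemma \ref{prop:size} and Lemma \ref{lemma:lambdas_exp}; but as you observe, the identification of $e_k(\lambda_1,\dots,\lambda_d)$ with the sum of $k\times k$ principal minors rests on the same expansion, so nothing is lost by your more elementary route, and you gain validity for arbitrary (not necessarily diagonalizable or p.s.d.) square $\bM$.
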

Note that the distribution samples out of a power set of $[d]$.
While cardinality constrained versions have also been used,
they lack certain properties such as a simple normalization
constant. Even though the subset size of $\DPP(\bM)$ is a random
variable, it is highly concentrated around its mean, and it can also
be easily adjusted by replacing the matrix with a rescaled version
$\frac1\alpha\bM$, where $\alpha>0$. This only affects the
distribution of the subset sizes, with the expected size given by the following
lemma \citep[see][]{dpp-ml}.
\begin{lemma}[Subset Size]\label{prop:size}
If $\hat S\sim \DPP\left(\frac{1}{\alpha}\bM\right)$, then
  \begin{equation} \label{eq:expected_size}
    \mE[|\hat{S}|] = \Tr(\bM(\alpha \bI + \bM)^{-1}).
  \end{equation}
\end{lemma}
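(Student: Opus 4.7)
The plan is to reduce the expected-size computation to differentiating the normalization constant from Lemma~\ref{l:normalization}. Writing $\bA \eqdef \frac{1}{\alpha}\bM$ for brevity, by the definition of $\DPP(\bA)$ we have
\[ \mE[|\hat S|] \;=\; \frac{1}{\det(\bI+\bA)}\sum_{S\subseteq [d]} |S|\,\det(\bA_{SS}), \]
so the task becomes evaluating the numerator in closed form.

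To do this I would introduce the one-parameter generating function
\[ g(t) \;\eqdef\; \sum_{S\subseteq[d]} t^{|S|}\det(\bA_{SS}) \;=\; \sum_{S\subseteq[d]} \det((t\bA)_{SS}), \]
and observe that the second equality lets me apply Lemma~\ref{l:normalization} with $\bM$ replaced by $t\bA$, giving the very compact identity $g(t)=\det(\bI+t\bA)$. Differentiating both sides with respect to $t$ using Jacobi's formula $\frac{d}{dt}\det(\bX(t)) = \det(\bX(t))\Tr(\bX(t)^{-1}\dot\bX(t))$ yields
\[ g'(t) \;=\; \det(\bI+t\bA)\,\Tr\!\bigl(\bA(\bI+t\bA)^{-1}\bigr). \]
On the other hand, differentiating $g$ termwise gives $g'(1) = \sum_S |S|\det(\bA_{SS})$, which is exactly the quantity appearing in the numerator above.

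Evaluating at $t=1$ and substituting back therefore gives
\[ \mE[|\hat S|] \;=\; \Tr\!\bigl(\bA(\bI+\bA)^{-1}\bigr) \;=\; \Tr\!\bigl(\tfrac{1}{\alpha}\bM(\bI+\tfrac{1}{\alpha}\bM)^{-1}\bigr) \;=\; \Tr\!\bigl(\bM(\alpha\bI+\bM)^{-1}\bigr), \]
where the last step just multiplies numerator and denominator inside the inverse by $\alpha$. The only step that deserves any care is justifying the term-by-term differentiation of $g$, but since the sum is finite (at most $2^d$ terms) this is automatic, so there is no real obstacle beyond recognizing that Lemma~\ref{l:normalization} can be leveraged as a generating function. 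An alternative route would be to write $|\hat S|=\sum_i \mathbf{1}[i\in\hat S]$, appeal to the standard $L$-ensemble marginal formula $\pP(i\in\hat S) = (\bA(\bI+\bA)^{-1})_{ii}$, and sum the diagonal; but the generating-function argument above is preferable since it uses only the normalization lemma already stated in the paper.
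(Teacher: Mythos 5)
Your proof is correct. The paper itself does not prove this lemma --- it quotes it as a standard fact from the DPP literature --- so there is no in-paper argument to compare against, but your derivation is a valid, self-contained justification. The key observation $t^{|S|}\det(\bA_{SS})=\det((t\bA)_{SS})$ is right (a $|S|\times|S|$ determinant picks up exactly $t^{|S|}$ under scaling), Lemma~\ref{l:normalization} applied to $t\bA$ gives $g(t)=\det(\bI+t\bA)$, Jacobi's formula applies since $\bI+t\bA\succ 0$ for $t\ge 0$, and the finite sum makes termwise differentiation trivial. The final rescaling $\Tr\bigl(\tfrac1\alpha\bM(\bI+\tfrac1\alpha\bM)^{-1}\bigr)=\Tr\bigl(\bM(\alpha\bI+\bM)^{-1}\bigr)$ is also correct. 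The textbook route (which the paper implicitly relies on by citation) instead uses the marginal kernel $\bK=\bA(\bI+\bA)^{-1}$ and writes $\mE[|\hat S|]=\sum_i\pP(i\in\hat S)=\Tr(\bK)$, or equivalently diagonalizes $\bA$ to get $\sum_i\lambda_i/(1+\lambda_i)$; your generating-function argument buys the same conclusion using only the normalization identity already stated in the paper, which is arguably the cleaner choice in this context. One stylistic nit: you invoke $g'(1)$ before formally stating that you evaluate at $t=1$, but the logic is unambiguous.
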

By varying the value of $\alpha$, we can obtain any desired expected
subset size between $0$ and $d$. As we increase $\alpha$, the subset
size decreases, whereas if we take $\alpha\rightarrow 0$, then in the
limit the subset size becomes $d$, i.e., always selecting the $[d]$.
While the relationship between $\alpha$ and $\mE[|\hat{S}|]$ cannot
be easily inverted analytically, it still provides a convenient way of
smoothly interpolating between the full Newton and coordinate
descent. To give a sense of what $\alpha$ can be used to ensure
subset size bounded by some $k$, we give the following lemma.
\begin{lemma}\label{lemma:lambdas_exp}
  Let $\{\lambda_i\}_{i=1}^d$ be the eigenvalues of $\bM$ in a decreasing order. 
If $\alpha = \sum_{j\geq k} \lambda_j$, then $\mE[|\hat{S}|] < k$.
\end{lemma}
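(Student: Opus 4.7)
The plan is to reduce the trace expression for $\mE[|\hat S|]$ from Lemma~\ref{prop:size} to a sum over eigenvalues and then split the sum at the index $k$. Concretely, I would diagonalize $\bM = U\Lambda U^\top$ with $\Lambda = \diag(\lambda_1,\dots,\lambda_d)$ in decreasing order; since $\alpha\bI+\bM = U(\alpha\bI+\Lambda)U^\top$, the matrix $\bM(\alpha\bI+\bM)^{-1}$ shares these eigenvectors, so
\[
\mE[|\hat S|] \;=\; \Tr\!\bigl(\bM(\alpha\bI+\bM)^{-1}\bigr) \;=\; \sum_{i=1}^d \frac{\lambda_i}{\alpha+\lambda_i}.
\]

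Next, I would split the sum at index $k$ and treat each piece with a different bound. For the ``head'' indices $i<k$, every summand is strictly less than $1$ whenever $\alpha>0$ (and equals $0$ otherwise), which yields
\[
\sum_{i<k} \frac{\lambda_i}{\alpha+\lambda_i} \;<\; k-1.
\]
For the ``tail'' indices $i\ge k$, I would simply drop $\lambda_i$ from the denominator and use the definition of $\alpha$:
\[
\sum_{i\ge k}\frac{\lambda_i}{\alpha+\lambda_i} \;\le\; \sum_{i\ge k}\frac{\lambda_i}{\alpha} \;=\; \frac{1}{\alpha}\sum_{i\ge k}\lambda_i \;=\; 1.
\]
Adding the two bounds gives $\mE[|\hat S|] < (k-1) + 1 = k$.

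The only subtle point, and the thing I would check explicitly, is the degenerate case $\alpha = 0$, which corresponds to $\lambda_j = 0$ for all $j\ge k$. In that case Lemma~\ref{prop:size} must be interpreted with the pseudoinverse, and one gets $\mE[|\hat S|] = \operatorname{rank}(\bM) \le k-1 < k$, so the claim still holds. Otherwise the argument is a completely routine two-piece bound on $\sum_i \tfrac{\lambda_i}{\alpha+\lambda_i}$; there is no real combinatorial obstacle here since all of the DPP content has been absorbed into the trace formula of Lemma~\ref{prop:size}.
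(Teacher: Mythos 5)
Your argument is exactly the paper's: diagonalize so that $\mE[|\hat S|]=\sum_i \lambda_i/(\lambda_i+\alpha)$, split the sum at index $k$, bound the $k-1$ head terms by $1$ each and the tail by $\sum_{i\ge k}\lambda_i/\alpha = 1$. The extra remark about $\alpha=0$ is harmless but moot, since $\bM$ is assumed positive definite so all eigenvalues (and hence $\alpha$) are strictly positive.
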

\subsection{New expectation formula}
We are now ready to state our main result regarding DPPs, which is a new
expectation formula that can be viewed as a matrix
counterpart of the determinantal identity from Lemma
\ref{l:normalization}. 
\begin{theorem}\label{t:main}
	If $\bM\succ \mathbf{0}$ and $\hat S\sim \DPP\left(\frac{1}{\alpha}\bM\right)$, then
	\begin{equation}
	\mE\big[ (\bM_{\hat S})^+\big] = (\alpha\bI+\bM)^{-1}.\label{eq:expectation}
	\end{equation}
\end{theorem}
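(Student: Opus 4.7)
The plan is to reduce Theorem~\ref{t:main} to a matrix-valued identity obtained by differentiating the scalar identity of Lemma~\ref{l:normalization}. Rescaling Lemma~\ref{l:normalization} (apply it to $\bM/\alpha$ and multiply through by $\alpha^d$) yields the standard characteristic-polynomial expansion
\begin{equation*}
  \det(\alpha\bI+\bM) \;=\; \sum_{S\subseteq[d]}\alpha^{d-|S|}\det(\bM_{SS}),
\end{equation*}
so that $\pP(\hat S=S) = \alpha^{d-|S|}\det(\bM_{SS})/\det(\alpha\bI+\bM)$, and \eqref{eq:expectation} becomes equivalent to the matrix identity
\begin{equation*}
  \mathrm{adj}(\alpha\bI+\bM) \;=\; \sum_{S\subseteq[d]}\alpha^{d-|S|}\det(\bM_{SS})\,(\bM_S)^+.
\end{equation*}

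To prove this matrix identity I would differentiate both sides of the scalar expansion entry-by-entry, using Jacobi's formula $\partial_{A_{qp}}\det(A)=[\mathrm{adj}(A)]_{pq}$. On the left this immediately gives $[\mathrm{adj}(\alpha\bI+\bM)]_{pq}$. On the right, $\det(\bM_{SS})$ depends on $\bM_{qp}$ only when $\{p,q\}\subseteq S$, in which case Jacobi's formula applied inside the $|S|\times|S|$ submatrix, combined with $(\bM_S)^+=\bI_{:S}\bM_{SS}^{-1}\bI_{:S}^\top$, yields $\partial_{\bM_{qp}}\det(\bM_{SS}) = \det(\bM_{SS})[(\bM_S)^+]_{pq}$; for the remaining $S$ both the derivative and $[(\bM_S)^+]_{pq}$ vanish, so the formula is uniform in $S$. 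Summing over $S$ matches the $(p,q)$ entries of the two sides; dividing by $\det(\alpha\bI+\bM)\neq 0$ recovers \eqref{eq:expectation}.

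The principal subtlety is that we ultimately want the identity for symmetric positive-definite $\bM$, while the differentiation step treats the entries of $\bM$ as $d^2$ independent variables. I would sidestep this by first establishing the matrix identity on the open set of generic (not necessarily symmetric) invertible $\bM\in\mR^{d\times d}$, where the scalar expansion of $\det(\alpha\bI+\bM)$ into principal minors still holds and every partial derivative is unconstrained; since both sides of the resulting matrix identity are rational functions in the entries of $\bM$, the formal equality persists after specializing to $\bM=\bM^\top\succ 0$, completing the proof.
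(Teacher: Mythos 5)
Your proof is correct, but it reaches the matrix identity by a different mechanism than the paper. The paper's proof fixes $\alpha=1$, tests $\mE[(\bM_{\hat S})^+]$ against an arbitrary vector $v$, and uses the rank-one determinant update $\det(\bM)v^\top\bM^{-1}v=\det(\bM+vv^\top)-\det(\bM)$ on each submatrix; summing over $S$ and invoking Lemma~\ref{l:normalization} twice (once for $\bM$, once for $\bM+vv^\top$) telescopes directly to $v^\top(\bI+\bM)^{-1}v$, and equality of quadratic forms gives equality of the symmetric matrices. Your route instead differentiates the principal-minor expansion of $\det(\alpha\bI+\bM)$ entry-by-entry via Jacobi's formula to obtain $\mathrm{adj}(\alpha\bI+\bM)=\sum_{S}\alpha^{d-|S|}\det(\bM_{SS})(\bM_S)^+$. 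The two arguments are cousins --- the paper's rank-one identity is precisely the directional derivative of the determinant along $vv^\top$, so both proofs amount to differentiating the normalization identity --- but they differ in execution. The paper's choice of symmetric rank-one directions keeps everything inside the cone of symmetric matrices, so no genericity argument is needed; your coordinate-direction derivatives force the detour through non-symmetric $\bM$ with $d^2$ independent entries, which you correctly identify as the principal subtlety and handle properly (the derived identity is a polynomial identity, since each term $\det(\bM_{SS})\bM_{SS}^{-1}=\mathrm{adj}(\bM_{SS})$ is polynomial, so it specializes to $\bM=\bM^\top\succ 0$ without issue). What your version buys is an explicitly stated adjugate-level identity valid for arbitrary square matrices, which is slightly more general than the symmetric p.s.d.\ statement the paper needs; what the paper's version buys is brevity and the avoidance of any limiting or genericity step.
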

\begin{remark}
If we let $\bM\succeq \mathbf{0}$, then the equality
in \eqref{eq:expectation} must be replaced by a p.s.d.~inequality $\preceq$.
\end{remark}
We postpone the proof to the appendix. The remarkable simplicity of our result
leads us to believe that it is of interest not only in the context of the Randomized Newton Method, but also to the broader DPP community. While some
matrix expectation formulas involving the pseudoinverse have been
recently shown for some special DPPs
\citep[e.g.,][]{unbiased-estimates-journal},
this result for the first time relates an \emph{unregularized} subsampled
pseudoinverse with a $\alpha\bI$-\emph{regularized} inverse of the full matrix
$\bM$. Moreover, the amount of  regularization that appears in the
formula is directly related to the expected sample size. 

\subsection{Efficient sampling}
Efficient DPP sampling has been an active area of research over the
past decade. Several different approaches have been developed,
such as an algorithm based on the eigendecomposition of $\bM$
\citep{dpp-independence,dpp-ml} as well as an approximate MCMC sampler
\citep{rayleigh-mcmc} among others. For our problem, it is important
to be able to sample 
from $\DPP(\bM)$ without having to actually construct the entire
matrix $\bM$, and much faster than it takes to compute the full
inverse $\bM^{-1}$. Moreover, being able to rapidly generate multiple independent
samples is crucial because of the iterative nature of the Randomized
Newton Method. A recently proposed DPP sampler satisfies all of these
conditions. We quote the time complexity of this method (the bounds
hold with high probability relative to the randomness of the algorithm).
\begin{lemma}[\cite{dpp-sublinear}]\label{t:dpp-algo}
For a $d\times d$ p.s.d.~matrix $\bM$ let $k=\mE[|\hat{S}|]$ where
$\hat{S} \sim\DPP(\bM)$. Given $\bM$, we can sample
\begin{enumerate}
\item the first $\hat S$ in:\quad
  $d\cdot\mathrm{poly}(k)\,\mathrm{polylog}(d)$ time, 
\item each next sample of $\hat S$ in:\hspace{6mm}  $\mathrm{poly}(k)$ time.
  \end{enumerate}
\end{lemma}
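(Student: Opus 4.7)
The plan is to construct a sampling algorithm in two stages: a cheap \emph{intermediate sampling} that produces a small candidate pool, followed by an exact DPP sampler run on the induced principal submatrix. This two-phase structure is what enables avoiding an explicit $O(d^3)$ eigendecomposition of $\bM$, which would dominate the claimed running time for part (1).

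First, as preprocessing I would approximate the regularized leverage scores $\ell_i = [\bM(\bI+\bM)^{-1}]_{ii}$. These quantities are precisely the marginal inclusion probabilities of $\DPP(\bM)$, and by Lemma~\ref{prop:size} they sum to $k = \mE[|\hat S|]$. The bottleneck is computing them in $d\cdot\mathrm{poly}(k)\,\mathrm{polylog}(d)$ time instead of via a full linear solve; this can be done by combining a random projection of $\bM$ (of rank $\mathrm{poly}(k)$) with a Johnson--Lindenstrauss style diagonal estimator, yielding multiplicative approximations $\tilde\ell_i$ whose sum remains $O(k)$ with high probability.

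Second, I would draw an intermediate set $T\subseteq[d]$ by independent Bernoulli trials with probabilities proportional to $\tilde\ell_i$, inflated by a $\mathrm{poly}(k)$ factor so that $|T|=\mathrm{poly}(k)$ with high probability and $T$ almost surely contains the eventual sample. On the submatrix $\bM_{TT}$, of size only $\mathrm{poly}(k)$, I would then run a classical eigendecomposition-based DPP sampler in $\mathrm{poly}(k)$ time. For the amortized per-sample cost in part (2), I would observe that the leverage-score sketch and the induced intermediate distribution depend only on $\bM$ and can be cached after the first draw; each subsequent sample then pays only the $\mathrm{poly}(k)$ cost of resampling $T$ from the precomputed distribution and rerunning the exact sampler on a fresh $\bM_{TT}$.

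The main obstacle, and where the bulk of the technical work in \cite{dpp-sublinear} lives, is the coupling argument showing that this two-stage procedure produces an \emph{exact} (not merely approximate) sample from $\DPP(\bM)$. The key identity is that conditioning $\DPP(\bM)$ on being a subset of a fixed $T$ yields $\DPP(\bM_{TT})$, but only up to a density reweighting that must be exactly offset by the intermediate distribution; this typically forces a rejection step whose acceptance probability has to be lower bounded using a spectral inequality for $\bM$, with the lower bound depending delicately on the accuracy of the $\tilde\ell_i$ estimates and on the inflation factor used to form $T$.
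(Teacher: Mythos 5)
The paper does not prove this lemma: it is imported verbatim from \cite{dpp-sublinear}, and the only in-text justification is the one-sentence remark that the algorithm first draws a larger i.i.d.\ sample according to the marginal probabilities $[\bM(\alpha\bI+\bM)^{-1}]_{ii}$ (the ridge leverage scores). Your sketch correctly reconstructs exactly that architecture --- fast approximate leverage-score preprocessing in $d\cdot\mathrm{poly}(k)\,\mathrm{polylog}(d)$ time, an intermediate pool $T$ of size $\mathrm{poly}(k)$, an exact DPP sampler on the induced submatrix, and a rejection/correction step to make the composition exact --- so it matches both the paper's allusion and the cited work. One small imprecision: the restriction identity is cleaner than you suggest, since for an L-ensemble $\pP(S\mid S\subseteq T)\propto\det(\bM_{SS})$ \emph{is} exactly $\DPP(\bM_{TT})$ with no density reweighting; the correction in \cite{dpp-sublinear} is needed not because the conditional law is distorted, but because $T$ is drawn from an i.i.d./product measure (with replacement, and with approximate rather than exact leverage scores) rather than from the true marginal law of supersets of the target sample, which is why the second stage uses a rescaled submatrix and a rejection step. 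Since the lemma is a black-box citation here, none of this affects the paper.
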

Note that the time it takes to obtain the first sample (i.e., the
preprocessing cost) is $o(d^2)$,
meaning that we do not actually have to read the entire matrix
$\bM$. Moreover, the cost of producing repeated samples only depends
on the sample size $k$, which is typically small. The key idea behind
the algorithm of \cite{dpp-sublinear} is to produce a
larger sample of indices drawn i.i.d. proportionally to the marginal probabilities
of $\DPP(\bM)$. For any $i\in [d]$, the marginal probability of $i$ in
$\hat{S} \sim \DPP(\frac1\alpha\bM)$ is:
\begin{align*}
  \pP(i\in \hat{S}) = \big[\bM(\alpha\bI+\bM)^{-1}\big]_{ii}.
\end{align*}
In the randomized linear algebra literature, this quantity is often
called the $i$th $\alpha$-ridge leverage score
\citep{ridge-leverage-scores}, and sampling i.i.d.~according to ridge
leverage scores is known to have strong 
guarantees in approximating p.s.d.~matrices.

Approximate ridge leverage score sampling incurs a
smaller preprocessing cost compared to a DPP \citep{Calandriello2017},
and basically no resampling cost. Motivated 
by this, we propose to use this sampling as  
a fast approximation to $\DPP(\frac1\alpha\bM)$ and our experiments
demonstrate that it exhibits similar convergence properties for
Randomized Newton.
We numerically compare the sampler from Lemma~\ref{t:dpp-algo}
against leverage score sampling in Appendix \ref{a:leverage}.

\section{CONVERGENCE ANALYSIS}\label{sec:analysis}

	In this section, we analyze the convergence properties of the
        update scheme \eqref{eq:update} with determinantal sampling
        defined by \eqref{eq:sample_def}. In order to establish linear
        rate of convergence, we need to assume strong convexity.
		\begin{assumption}[Strong Convexity]\label{ass:strongconvex}
		Under Assumption \ref{ass:smooth}, there exists a $\kappa > 0$ such that $\forall x,h \in \mathbb{R}^d$,
		\begin{equation*}\label{eq:strgcnvx}
		f(x)+\braket{\nabla f(x),h}+\frac{\kappa}{2}\braket{h,\mathbf{M}h}\leq f(x+h)
		\end{equation*}
	\end{assumption}
	
	 Intuitively, the parameter $\kappa \in (0,1]$ measures the
         degree of accuracy of our quadratic approximation. For a
         quadratic function $\kappa = 1$. 
	
	\begin{lemma}[\cite{Qu2015Feb}]\label{theorem:1}
          Under Assumptions \ref{ass:smooth} and \ref{ass:strongconvex},
let $\{x^k\}_{k\geq0}$ be a sequence of random vectors produced by the
Algorithm with a proper sampling $\hat{S}$, and let $x^*$ be the optimum
of $f$. Then, 
		\begin{equation*}
		\mE\big[f(x^{k+1})-f(x^*)\big]\leq
                \big(1-\sigma(\hat{S})\big)\mE\big[f(x^k)-f(x^*)\big] ,
		\end{equation*}
		where
		\begin{equation}\label{eq:sigma_1}
		\sigma (\hat{S}) \eqdef \kappa\cdot
                \lambda_{\min}\big(\mathbf{M}^{1/2}
                \mathbb{E}[(\mathbf{M}_{\hat{S}})^{+}]\mathbf{M}^{1/2}\big). 
		\end{equation}
	\end{lemma}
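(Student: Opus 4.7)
The plan is to derive a one-step descent inequality, introduce the expected pseudoinverse via the sampling, and then close the loop with strong convexity. Throughout, let $g_k \eqdef \nabla f(x_k)$ and $P_k \eqdef (\bM_{S_k})^+$, so the update reads $x_{k+1}=x_k - P_k g_k$. The step $h=-P_k g_k$ is supported on the coordinate block $S_k$, so $\bI_{:S_k}\bI_{:S_k}^\top h = h$, which in turn gives $\braket{h,\bM h} = \braket{h,\bM_{S_k} h}$ by the definition \eqref{eq:slice}. This is the crucial observation that lets us use smoothness with $\bM$ while working only with the sparsified block.

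Applying Assumption \ref{ass:smooth} to this $h$ and using the pseudoinverse identity $P_k \bM_{S_k} P_k = P_k$, a direct computation yields the per-step bound
\begin{equation*}
f(x_{k+1}) \;\leq\; f(x_k) - \tfrac12\, g_k^\top P_k g_k.
\end{equation*}
Taking conditional expectation over $S_k=\hat S$ (conditioned on $x_k$) introduces the expected pseudoinverse: $\mE[f(x_{k+1})\mid x_k] \leq f(x_k) - \tfrac12 g_k^\top \mE[(\bM_{\hat S})^+]\, g_k$.

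Next I would lower-bound the quadratic form $g_k^\top \mE[(\bM_{\hat S})^+]g_k$ in terms of $f(x_k)-f(x^*)$. Strong convexity (Assumption \ref{ass:strongconvex}) gives, after minimizing the lower quadratic in $h$, the Polyak-type inequality $g_k^\top \bM^{-1} g_k \geq 2\kappa\,(f(x_k)-f(x^*))$. Setting $v=\bM^{-1/2}g_k$ we rewrite
\begin{equation*}
g_k^\top \mE[(\bM_{\hat S})^+] g_k \;=\; v^\top \bM^{1/2}\,\mE[(\bM_{\hat S})^+]\,\bM^{1/2} v \;\geq\; \lambda_{\min}\!\big(\bM^{1/2}\mE[(\bM_{\hat S})^+]\bM^{1/2}\big)\cdot g_k^\top \bM^{-1} g_k,
\end{equation*}
and combining with the Polyak inequality produces exactly the factor $\sigma(\hat S) = \kappa\,\lambda_{\min}(\bM^{1/2}\mE[(\bM_{\hat S})^+]\bM^{1/2})$.

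Plugging back gives $\mE[f(x_{k+1})-f(x^*)\mid x_k] \leq (1-\sigma(\hat S))(f(x_k)-f(x^*))$, and the tower property then yields the claimed recursion. The main delicate point to get right is the first step: justifying the reduction from $\braket{h,\bM h}$ to $\braket{h,\bM_{S_k} h}$ and verifying the pseudoinverse identity that makes the cross-term cancel cleanly. Properness of $\hat S$ is needed only to ensure that $\mE[(\bM_{\hat S})^+]$ is positive definite so that $\sigma(\hat S)>0$; otherwise the argument is purely algebraic once the descent inequality is in place.
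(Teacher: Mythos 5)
Your proof is correct and is essentially the standard argument: the paper itself does not reprove this lemma (it is imported from Qu and Richt\'arik, 2015), but your three steps --- block-restricted descent via $\braket{h,\bM h}=\braket{h,\bM_{S_k}h}$ and $P_k\bM_{S_k}P_k=P_k$, conditional expectation to introduce $\mE[(\bM_{\hat S})^+]$, and the Polyak-type bound $g_k^\top\bM^{-1}g_k\ge 2\kappa(f(x_k)-f(x^*))$ from Assumption \ref{ass:strongconvex} --- are exactly the steps the paper uses in its analogous proof of Theorem \ref{thm:rel} in the appendix. No gaps.
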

Strong convexity is not necessary to run RNM (\ref{eq:update}). In the
cases where the function is only convex, we recover the standard sublinear
rate depending on $\sigma$. 
\begin{lemma}[\cite{Karimireddy2018a}]\label{thm:convex}
Let $f$ be convex and satisfy Assumption \ref{ass:smooth}. Then using
the update scheme in \eqref{eq:update} with any proper 
sampling,
\[\mE[f(x^k) - f(x^*)] \leq \frac{2D}{\sigma(\hat{S}) k}	\] 
where $\sigma(\hat{S})$ is as in \eqref{eq:sigma_1}, and $D = \max_{x}
\{ (x^* -x)^\top \bM (x^* -x) | f(x) \leq f(x^0) \}$ is the set
diameter in $\bM$ geometry at the initial level sets.   
\end{lemma}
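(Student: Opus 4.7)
My plan is to mimic the classical $O(1/k)$ proof for convex smooth optimization, using $G := \mE[(\bM_{\hat S})^+]$ as the ``inverse smoothness'' operator for the random Newton step. The only nonstandard ingredients are (i) that each step moves only in a random subspace, so the per-step descent is measured in the random $(\bM_{S_k})^+$ norm of the gradient, and (ii) that the relevant metric for distance to the optimum is $G^{-1}$, which by the definition of $\sigma(\hat S)$ in \eqref{eq:sigma_1} satisfies $G^{-1}\preceq \sigma(\hat S)^{-1}\bM$.

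First I would establish a per-step descent. Substituting $h_k = -(\bM_{S_k})^+\nabla f(x^k)$ into the smoothness inequality \eqref{eq:smooth} and using the algebraic identity $(\bM_{S_k})^+\bM(\bM_{S_k})^+ = (\bM_{S_k})^+$ (which follows from \eqref{eq:slice} together with $\bI_{:S_k}^\top\bM\bI_{:S_k}=\bM_{S_k S_k}$), the linear and quadratic terms collapse to give, pointwise in $S_k$, $f(x^{k+1}) \le f(x^k) - \tfrac{1}{2}\nabla f(x^k)^\top(\bM_{S_k})^+\nabla f(x^k)$. Taking conditional expectation over $\hat S$ yields $f(x^k) - \mE[f(x^{k+1})\mid x^k] \ge \tfrac{1}{2}\, g_k^\top G\, g_k$, where $g_k := \nabla f(x^k)$. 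The pointwise bound also gives $f(x^k)\le f(x^0)$ almost surely, so $(x^k - x^*)^\top\bM(x^k - x^*)\le D$ on every sample path, licensing the use of $D$ below.

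Next, combining convexity $f(x^k)-f(x^*) \le \langle g_k, x^k-x^*\rangle$ with Cauchy--Schwarz in the $G$ inner product, I get $(f(x^k) - f(x^*))^2 \le \bigl(g_k^\top G\, g_k\bigr)\bigl((x^k - x^*)^\top G^{-1}(x^k - x^*)\bigr)$. The first factor is at most $2\bigl(f(x^k) - \mE[f(x^{k+1})\mid x^k]\bigr)$ by the descent, and the second factor is at most $D/\sigma(\hat S)$ since $G^{-1}\preceq \sigma(\hat S)^{-1}\bM$. Hence, almost surely, $(f(x^k) - f(x^*))^2 \le \tfrac{2D}{\sigma(\hat S)}\bigl(f(x^k) - \mE[f(x^{k+1})\mid x^k]\bigr)$.

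Finally, taking total expectation, applying Jensen on the left, and setting $\delta_k := \mE[f(x^k)-f(x^*)]$, I obtain $\delta_k^2 \le (2D/\sigma(\hat S))(\delta_k - \delta_{k+1})$. Dividing by $\delta_k\delta_{k+1}$ (using monotonicity $\delta_{k+1}\le \delta_k$) gives $1/\delta_{k+1} - 1/\delta_k \ge \sigma(\hat S)/(2D)$, and telescoping from $0$ to $k$ while dropping the nonnegative term $1/\delta_0$ yields the claimed bound. The step I expect to be the most delicate is the Cauchy--Schwarz pairing: the descent naturally produces $g_k^\top G\, g_k$, whereas the diameter is measured in $\bM$, so the recursion only closes because the defining inequality for $\sigma(\hat S)$ is precisely the one making $G^{-1}$ comparable to $\bM$.
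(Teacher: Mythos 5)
Your proof is correct. Note that the paper does not prove this lemma itself --- it is imported from \cite{Karimireddy2018a} as a black box --- so there is no in-paper argument to compare against; what you give is the standard $O(1/k)$ recursion, correctly adapted to the randomized-subspace setting. All the nonstandard ingredients check out: the identity $(\bM_{S})^+\bM(\bM_{S})^+=(\bM_{S})^+$ does follow from $\bI_{:S}^\top\bM\bI_{:S}=\bM_{SS}$ and collapses the smoothness bound to a pointwise descent of $\tfrac12 g_k^\top(\bM_{S_k})^+g_k$; the almost-sure monotonicity keeps every iterate in the initial level set, which is exactly what licenses the constant $D$; and the comparison $G^{-1}\preceq\sigma(\hat S)^{-1}\bM$ (valid since a proper sampling makes $G=\mE[(\bM_{\hat S})^+]$ positive definite) is precisely the bridge between the $G$-norm produced by the descent and the $\bM$-norm in which $D$ is measured. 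Two cosmetic points: the recursion $1/\delta_{k+1}-1/\delta_k\ge\sigma/(2D)$ presupposes $\delta_{k+1}>0$ (otherwise the conclusion is trivial), and the factor $\kappa$ appearing in \eqref{eq:sigma_1} has no meaning absent strong convexity --- your argument effectively proves the bound with $\kappa=1$, which only strengthens the stated inequality when $\kappa\le 1$; this is an imprecision in the lemma statement rather than in your proof.
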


The preceding two lemmas introduced the quantity $\sigma(\hat{S})$ characterizing
the theoretical convergence rate of the method. By
applying our new expectation formula (Theorem \ref{t:main}) we obtain
a simple form for this
quantity under DPP sampling.

		\begin{theorem}\label{corr:recurence}
Under Assumption \ref{ass:smooth}, given $\alpha>0$:
		\begin{equation}\label{eq:corr}
\sigma(\hat{S})  = \kappa \frac{ \lambda_d }{ \lambda_d +
  \alpha}\quad\text{for}\quad\hat{S} \sim \DPP\big(\tfrac{1}{\alpha} \bM \big),
		\end{equation}
		where $\lambda_d = \lambda_{\min}(\bM)$. 
	\end{theorem}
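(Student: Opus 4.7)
The plan is a direct application of Theorem \ref{t:main} (the new DPP expectation formula) followed by simultaneous diagonalization. Starting from the definition in \eqref{eq:sigma_1}, I would substitute $\mE[(\bM_{\hat{S}})^+] = (\alpha\bI + \bM)^{-1}$, which is exactly what Theorem \ref{t:main} gives for $\hat{S}\sim\DPP(\tfrac{1}{\alpha}\bM)$ under Assumption \ref{ass:smooth} (which guarantees $\bM\succ\mathbf{0}$). This turns the quantity inside $\lambda_{\min}$ into $\bM^{1/2}(\alpha\bI+\bM)^{-1}\bM^{1/2}$.

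Next I would observe that since $\bM$ is symmetric p.d., the matrices $\bM^{1/2}$ and $(\alpha\bI+\bM)^{-1}$ are simultaneously diagonalizable in the eigenbasis of $\bM$. Consequently,
\begin{equation*}
\bM^{1/2}(\alpha\bI+\bM)^{-1}\bM^{1/2} = \bM(\alpha\bI+\bM)^{-1},
\end{equation*}
and if $\{\lambda_i\}_{i=1}^d$ are the eigenvalues of $\bM$ (sorted in decreasing order as in Lemma \ref{lemma:lambdas_exp}), then the eigenvalues of this product are precisely $\lambda_i/(\alpha+\lambda_i)$.

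Finally, the map $t\mapsto t/(\alpha+t)$ is strictly increasing on $(0,\infty)$ because its derivative $\alpha/(\alpha+t)^2$ is positive for $\alpha>0$. Hence the minimum eigenvalue is attained at the smallest $\lambda_i$, namely $\lambda_d = \lambda_{\min}(\bM)$, yielding
\begin{equation*}
\lambda_{\min}\bigl(\bM^{1/2}(\alpha\bI+\bM)^{-1}\bM^{1/2}\bigr) = \frac{\lambda_d}{\lambda_d + \alpha}.
\end{equation*}
Multiplying by $\kappa$ gives the claimed formula \eqref{eq:corr}.

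There is no real obstacle here given Theorem \ref{t:main}: all the nontrivial content of the argument is packaged into the expectation formula, and the remainder is a routine spectral calculation. The only point worth flagging is that Theorem \ref{t:main} requires $\bM\succ\mathbf{0}$ (strict positive definiteness), so one should note that Assumption \ref{ass:smooth} together with the setup of the theorem (which implicitly uses $\lambda_d > 0$, so that $\lambda_d/(\lambda_d+\alpha)$ is well-defined and positive) ensures this; in the p.s.d.~but not p.d.~case the remark after Theorem \ref{t:main} gives only a one-sided inequality, and $\lambda_d = 0$ would force $\sigma(\hat{S})=0$ in agreement with the formula.
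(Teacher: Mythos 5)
Your proposal is correct and follows essentially the same route as the paper: substitute the expectation formula from Theorem \ref{t:main} into \eqref{eq:sigma_1} and then evaluate $\lambda_{\min}\bigl(\bM^{1/2}(\alpha\bI+\bM)^{-1}\bM^{1/2}\bigr)$ by a routine spectral computation (the paper rewrites the matrix as $(\alpha\bM^{-1}+\bI)^{-1}$ and uses $\lambda_{\min}$ of an inverse, while you diagonalize and invoke monotonicity of $t\mapsto t/(t+\alpha)$ --- the same calculation in different clothing). Your remark about the strict positive definiteness required by Theorem \ref{t:main} is a point the paper's proof glosses over, and is a welcome addition.
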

	
Note that $\sigma(\hat{S})$ depends solely on the
smallest eigenvalue and the parameter $\alpha$ controlling the
expected size. This is not the case for other samplings, and other
closed forms are not known in general \citep{Qu2015Feb}.  

Recall that the smaller the $\alpha$ the bigger the subsets. The
closed form expression from Theorem \ref{corr:recurence} combined with
Lemma~\ref{lemma:lambdas_exp} allows us to formulate a  
recurrence relation between the convergence rates with different
expected set sizes.  
		\begin{proposition}[Recurrence relation]\label{prop:recurence}
  Let $\{\lambda_i\}_{i=1}^d$ be the eigenvalues of $\bM$ in a
  decreasing order. Let $k<d$ be a positive integer, $\alpha(k) = \sum_{i > k-1}\lambda_i$, and $\sigma(k) = \frac{\lambda_d}{\lambda_d + \alpha(k)}$. Then, 
		\[  \sigma(k) = \frac{\sigma(k+1)}{1 + \frac{\lambda_{k}}{\lambda_d}\sigma(k+1)}  \] 
		and $\sigma(d) = 1$ while $\mE[|S|] < k$. 
	\end{proposition}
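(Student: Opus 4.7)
The plan is to obtain the recurrence by direct algebraic verification, and the subset-size bound by a one-line appeal to Lemma \ref{lemma:lambdas_exp}, since all the substantive work has already been done in Theorem \ref{corr:recurence}.

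First I would record the telescoping identity $\alpha(k) = \alpha(k+1) + \lambda_k$, which is immediate from the definition $\alpha(k) = \sum_{i>k-1}\lambda_i$. Next, starting from the right-hand side of the claimed recurrence, I substitute $\sigma(k+1) = \lambda_d/(\lambda_d+\alpha(k+1))$ and clear the inner fraction. The numerator becomes $\lambda_d/(\lambda_d+\alpha(k+1))$, and the denominator becomes
\[
1 + \frac{\lambda_k}{\lambda_d}\cdot\frac{\lambda_d}{\lambda_d+\alpha(k+1)} \;=\; \frac{\lambda_d+\alpha(k+1)+\lambda_k}{\lambda_d+\alpha(k+1)}.
\]
The two $(\lambda_d+\alpha(k+1))$ factors cancel, leaving $\lambda_d/(\lambda_d+\alpha(k+1)+\lambda_k)$, and applying the telescoping identity turns this into $\lambda_d/(\lambda_d+\alpha(k))$, which by definition is $\sigma(k)$. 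The boundary value of $\sigma$ at the top of the recursion is then read off directly from the definition of $\alpha$ at the terminal index.

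For the second assertion, the definition $\alpha(k)=\sum_{i\geq k}\lambda_i$ matches exactly the hypothesis of Lemma \ref{lemma:lambdas_exp}, so applying that lemma with $\hat S\sim\DPP\bigl(\tfrac{1}{\alpha(k)}\bM\bigr)$ yields $\mE[|\hat S|]<k$ immediately.

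The hard part will be essentially nothing; the entire content of the proposition is an algebraic identity that unpacks the closed form from Theorem \ref{corr:recurence} into an iterative form, and the only mild care required is to keep the indexing of $\alpha(k)$ and the telescoping step $\alpha(k)-\alpha(k+1)=\lambda_k$ consistent between the two parts of the statement. The value of the recurrence is interpretive rather than technical: it expresses the gain from increasing the expected block size from roughly $k$ to roughly $k+1$ purely in terms of the newly included eigenvalue $\lambda_k$ and the smallest eigenvalue $\lambda_d$, which is what enables the spectral-decay classification used in later sections.
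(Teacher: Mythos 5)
Your proof is correct and essentially identical to the paper's: both arguments reduce to the telescoping identity $\alpha(k)=\alpha(k+1)+\lambda_k$, the paper manipulating the reciprocals $1/\sigma(k)$ while you verify the recurrence directly by substitution, and the subset-size claim follows from Lemma \ref{lemma:lambdas_exp} exactly as you say. (One caveat concerning the proposition itself rather than your argument: with $\alpha(d)=\sum_{i>d-1}\lambda_i=\lambda_d$ the boundary value is $\sigma(d)=1/2$, not $1$, so the stated $\sigma(d)=1$ implicitly uses the convention $\alpha(k)=\sum_{i>k}\lambda_i$ --- an off-by-one that the paper's own proof also leaves unaddressed, and that your ``read off directly'' glosses over in the same way.)
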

	
	This result allows us to further improve the theoretical
        bounds from \cite{Qu2015Feb} on the parameter
        $\sigma$. Namely, it has been previously established that
        $\sigma$ grows at least linearly with the increasing subset
        size of $\tau$-uniform sampling, i.e., $\tau\sigma(1) \leq
        \sigma(\tau)$. We can establish more informative bounds
        depending on the eigenvalue decay. Specifically, for a
        decreasing sequence of eigenvalues $\{\lambda_i
        \}_{i=1}^d$,   	
	\begin{equation}\label{eq:tighter}
		\left(1 + \sum_{j=1}^{\tau-1}\frac{\lambda_j}{\lambda_d}\right)\sigma(1) \leq \sigma(\tau).
	\end{equation}

	For example, given exponentially decaying eigenvalues
        $\lambda_i = \gamma^i$ where $\gamma < 1$, the increase is at
        least exponential, and the convergence rate is at least $(1+(\tau-1)\gamma^{\tau-d})$ bigger.  The case with linear speed-up is recovered when all eigenvalues are equal.


\section{OPTIMAL BLOCK SIZE}\label{sec:decay}

Our results such as Proposition \ref{prop:recurence} and inequality
\eqref{eq:tighter} describe the convergence speedup of using larger
coordinate blocks for RNM with determinantal sampling as a function of
the eigenvalues of $\bM$. In this 
section, we demonstrate that covariance matrices  arising in kernel
ridge regression have known asymptotic eigenvalue decays, which allows
for a precise characterization of RNM performance. 

\subsection{Kernel Ridge Regression} \label{sec:ridge}
The motivating example for our analysis is the dual formulation of
kernel ridge regression which is a natural application for block
coordinate descent because of its high dimensionality.
Suppose our (primal) regression problem is defined by the following
objective:
\[
\min_{x \in \mR^d}	\frac{1}{n}  \sum_{i=1}^{n}  \frac{1}{2}(\Phi(a_i)^\top x - y_i)^2 + \frac{\lambda}{2} \norm{x}^2_2 ,
\]
where $\Phi(\cdot)$ represents the kernel feature mapping and $\lambda$ is the
regularization parameter. Due to the Fenchel duality theorem \citep{Borwein2005},
the dual formulation of this problem is:
\begin{equation}\label{eq:dual}
\min_{\alpha \in \mR^n} \frac{1}{2n} \alpha^\top \bK \alpha + \frac{\lambda}{2}\sum_{i=1}^{n}\left(\alpha_i^2 + 2\alpha_i y_i\right),
\end{equation}
where $\bK_{ij} = \Phi(a_i)^\top\Phi(a_j)$. It is easy to see that the
minimization problem \eqref{eq:dual} is exactly in the right form
for RNM to be applied with the matrix $\bM = \frac{1}{n}\bK + \lambda
\bI$. Notice that $\bM$ is an $n\times n$ matrix and sampling
sub-matrices of $\bM$ has the interpretation of subsampling the
dataset. However, to keep the notation consistent with earlier
discussion, w.l.o.g.~we will let $d=n$ for the remainder of this
section so that $\bM$ is $d\times d$. We will also assume that the
minimization problem is solved with the RNM update where each
coordinate block is sampled as $\hat{S}\sim\DPP(\frac1\alpha\bM)$.
		\begin{figure*}
			\centering
			\begin{subfigure}[b]{0.48\textwidth}
				\includegraphics[width =
                                1\textwidth]{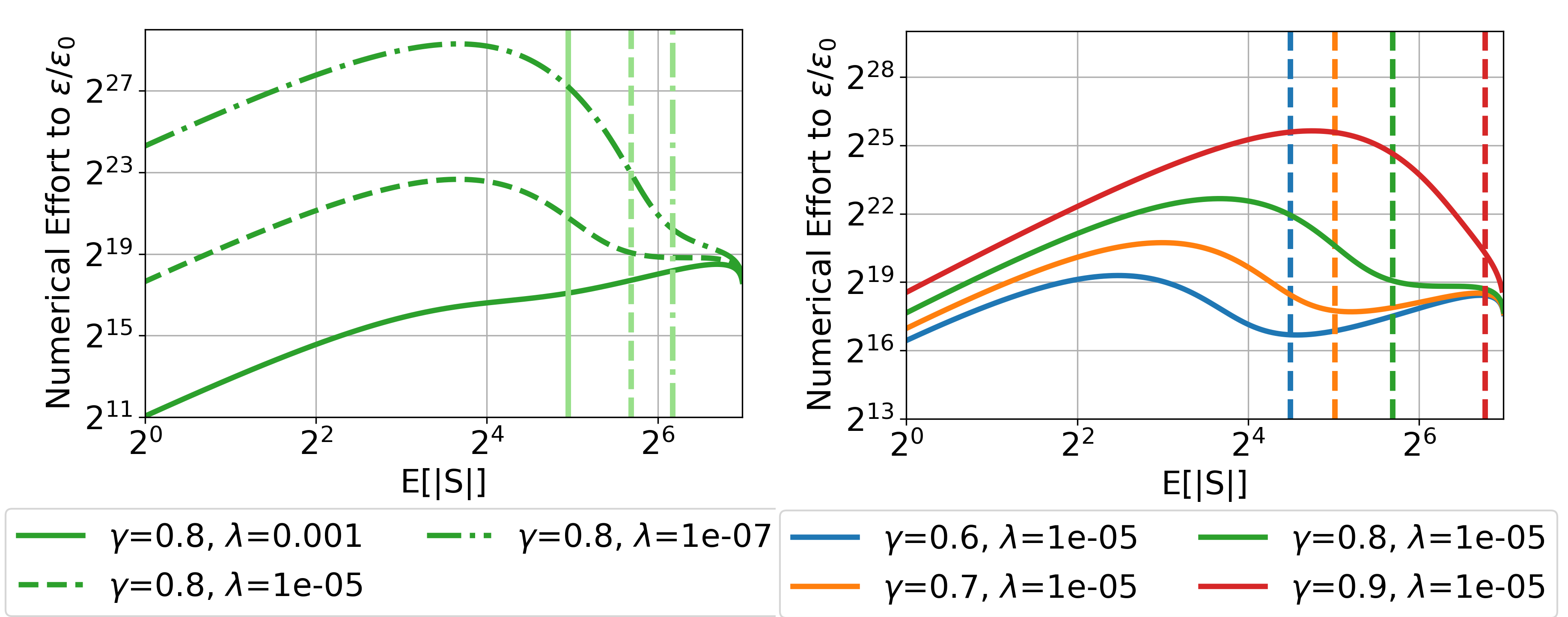}
                                \vspace{2mm}
				\caption{The left column varies
                                  $\lambda$ and the right one varies
                                  $\gamma$. The vertical line
                                  corresponds to the value of $q =
                                  \frac{\log(\lambda)}{\log(\gamma)}$.} 
				\label{fig:gamma}
                              \end{subfigure}
                              \hspace{2mm}
			\begin{subfigure}[b]{0.48\textwidth}
				\includegraphics[width = 1\textwidth]{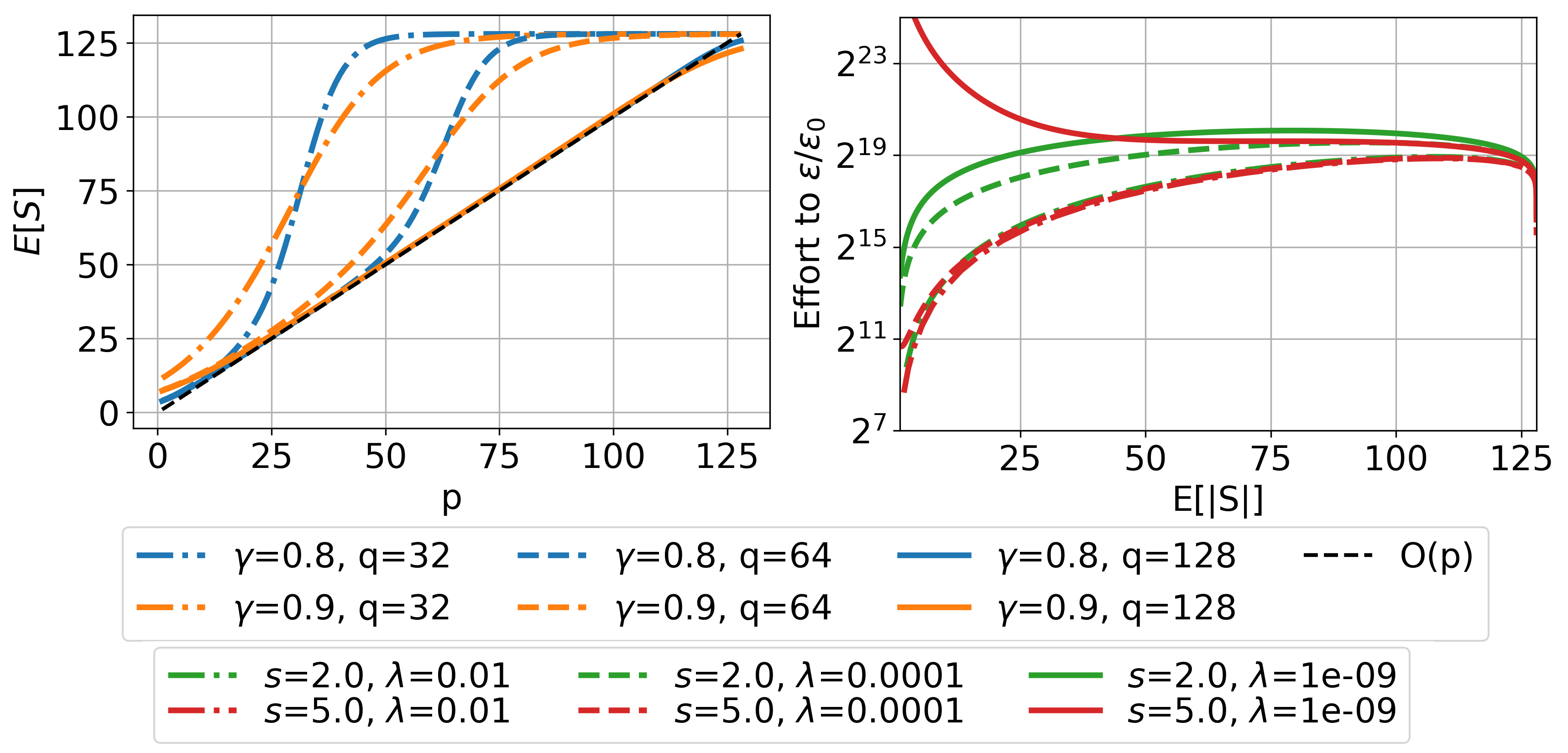}
				\caption{(left) $\mE[|S|]$ vs
                                  parameter p for exponential kernel.\\
                                  ~\quad(right) Numerical effort for polynomial decay.}
				\label{fig:esize}
                              \end{subfigure}
                              \vspace{1mm}
\caption{In (a) we consider exponentially decreasing eigenvalues.
In (b) (left) we plot the relationship
  between $p$ and $\mE[\hat{S}]$ for exponential decay. In (b) (right)
  we show the numerical effort for polynomial decay.}  
			\label{fig:exponential}
		\end{figure*}
	\subsection{Exponentially decreasing spectrum}\label{sec:exponential_decay}
	Let $\{\lambda_i\}_{i=1}^d$ be the eigenvalues of $\bM$ in decreasing order. Suppose that the eigenvalue decay is exponentially decreasing:
        \[ \lambda_i = C \gamma^i + C\lambda  \text{ for } \gamma<1. \]    
        \paragraph{Motivation }
        A classical motivating example for the exponential eigenvalue decay
        is the \emph{squared exponential kernel}, where an
        analytical form of the decay can be derived for normally
        distributed data \citep{Rasmussen2006a}. In particular,
        assuming $x \sim \mN(0,\eta^2)$, and using the kernel
        function $k(x,y) = 
        \exp\big(-\frac{(x-y)^2}{2l^2}\big)$ in one dimension, the 
        eigenvalues satisfy $\lambda_k \leq 
        C\gamma^k$ for a general constant $C$ independent of $k$, 
        where  
   \begin{equation}\label{eq:se_decay}
   \gamma = \frac{2\eta^2}{l^2 + 2\eta^2 + \sqrt{l^2 + 2\eta^2}}.
   \end{equation}
    \paragraph{Complexity} For the ease of exposition, suppose that
    $\gamma^{q+1} \leq \lambda \leq \gamma^q$, where $\lambda$ is
    the regularization constant and $q \in [d]$. Intuitively, this means that the
    regularization parameter flattens the decay at $\gamma^{q+1}$, which will play a role in the analysis. 

    To control the expected size $\mE[|\hat{S}|]$ of determinantal
    sampling, let $\alpha(p) = C\gamma^p$, where $p \in [1,d]$.
We get:
    \[ \mE[|\hat S|] \stackrel{\eqref{eq:expected_size}}  \leq
        p + R_d(p,q)\quad\text{for}\quad\hat S\sim\DPP\big(\tfrac1{\alpha(p)}\bM\big), \] where  $R_d(p,q) = \sum_{i=1}^{d-p}
        \frac{\gamma^i + \gamma^{q-p}}{\gamma^i +
          \gamma^{q-p}+1}$. Asymptotically, if
        $p\ll q$, i.e., the parameter $\alpha(p)$ dominates the
        regularization $C\lambda$, then the expected
        subset is $\mE[|\hat S|] \approx p$. However, in the regime
        where $p=\Omega(q)$, the expected 
        subset size rapidly goes up to $d$ (see Figure \ref{fig:esize}
        (left)).
        
    We  now derive the convergence rate of RNM under determinantal
    sampling:
	\[1 - \sigma(\hat{S}) \stackrel{\eqref{eq:corr}} =
          \frac{C\gamma^p}{C\gamma^d + C\lambda  + C\gamma^p} \geq
          \frac{1}{1 + \gamma^{-p}(\gamma^{d} + \gamma^{q}) }. \] 
Likewise one can see that the convergence rate improves exponentially
with $p$.  
	
	\paragraph{Numerical effort} From Theorem \ref{theorem:1}, we know that in order to reach an $\epsilon$ accurate solution from the initial
	accuracy $\epsilon_0  = f(x^0)-f(x^*)$ under the convergence rate $ \epsilon \leq
	(1-\sigma)^T \epsilon_0$, the number of needed steps can be bounded by
	\begin{equation}\label{eq:12}
	T \leq \frac{\log\left({\epsilon_t}\right) -\log\left({\epsilon_0}\right)  }{\log(1-\sigma)}. 
	\end{equation}
Using the bound derived for $1-\sigma(\hat S)$, we obtain \(T \leq
\big(\log(\frac{1}{\epsilon_t})-\log(\frac{1}{\epsilon_0})\big)
\big(\frac{\gamma^{p}}{\gamma^{d} + \gamma^{q}} + 1 \big)\).	Since,
the computation step is dominated by the inversion operation
$\mO(\mE[|\hat S|]^3)$, the number of arithmetic
operations is
	\[ \mO\left(\mE[|S|]^3\cdot T\right)\leq \mO\left(  \big(p +  R_d(p,q)\big)^3 \frac{\gamma^{p}}{\gamma^{d} + \gamma^{q}} \right).    \]
	The upper bound on the numerical effort in the previous
        equation has two regimes. At first, for small subset sizes it is
        increasing, but then exponential decay starts to dominate and
        using larger blocks significantly improves the convergence
        rate. Finally it flattens around $\mE[|\hat S|] = q$. Note that when $\lambda
        \approx \gamma^q$, i.e., for $q=\frac{\log(\lambda)}{\log(\gamma)}$, this
        phenomenon is visualized in Figure 
        \ref{fig:gamma} where the vertical bars correspond to $q$. In
        the regime where $d \approx q$, inverting the whole matrix
        seems to be the best option. When $q<d$, the term $\gamma^q$
        dominates the term $\gamma^d$, and the best subset size is
        either 1 or on the order of $q$, depending on the value of $\lambda$.
	
	These observations are contrary to the intuition from the
        previous works. We suspect that, due to fixed memory fetching
        costs, for small sizes the initial phase is unobserved but the
        second phase should be observed. Figure \ref{fig:gamma} 
        suggests that for sufficiently small values of $\lambda$ the
        numerical performance is maximized at the attenuation point
        $q$ and the predicted optimal block size is
          $\frac{\log(\lambda)}{\log(\gamma)}$.  
	
\subsection{Polynomially decaying spectrum}
	
Suppose that the eigenvalues $\{\lambda_i\}_{i=1}^d$
are decreasing polynomially, i.e., so that $\lambda_i
 = Ci^{-s} + C\lambda$ for $s > 1$. 
		
\paragraph{Motivation} For example, consider a
Mat\'ern kernel of order $s$, which has the form
$k(x,y)=C_2B_s(\norm{x-y})\exp(-C_1 \norm{x-y})$, where
$C_1$, $C_2$ are constants, and $B_s(d)$ is a modified
Bessel function of order $s$
\citep[see][]{Rasmussen2006a}. This class of kernels
exhibits asymptotically polynomial decay of eigenvalues
\citep[see][]{Seeger2008}.  

\paragraph{Complexity} Suppose, $\lambda = q^{-s}$ for
$q \in [1,\infty)$. To control the expected size let
us parameterize the tuning parameters as $\alpha
=Cp^{-s}$, where $C$ is a suitable general
constant. Then the convergence rate becomes:
\[1-\sigma(\hat{S}) = \frac{p^{-s}}{d^{-s}+p^{-s} + q^{-s} }= \frac{1}{(p/d)^{s} + (p/q)^{s}+ 1 } \]
and $ \mE[|\hat S|] = \sum_{i=1}^{d} \frac{ i^{-s} + q^{-s}}{i^{-s} + p^{-s}+q^{-s}}$. If $p \ll q$, we can establish by integral approximation that $\mE[|S|] \approx \mO(p)$, otherwise the expected size grows faster. Additionally, with increasing $p$ the convergence rate always improves. 
		
\paragraph{Numerical Effort} When $p \ll q$, similarly as in the preceding subsection,
the numerical cost becomes \( \mO\big(  p^3
  (\frac{d^sq^s}{p^s(q^s+d^s)}   )  \big)   \). This suggests that for $s \geq 3$ the
total numerical cost decreases for larger subsets, while for the problems with smaller $s$,
the cost increases. In general, it is difficult to obtain general
insights from the formulas, but the visalization in Figure
\ref{fig:exponential}b (right) suggests that if the regularization constant is
large (small $q$), even problems with large $s$ might incur more cost
as the subset size increases.  

This suggests that \emph{small block sizes matching the memory
  fetching costs} should be optimal if either the
regularization is large or if $s$ is small. With the same assumption,
if the desired accuracy is very high, performing \emph{full matrix
  inversion} can be more efficient, corresponding to $\mE[|\hat S|]
\rightarrow d$ in Figure~\ref{fig:exponential}b (right). Note that
increasing the accuracy to which we optimize the problem shifts the
curves up in the logarithmic plot, while keeping the end point fixed. 
\begin{figure*}
	\centering
	\begin{subfigure}[t]{0.49\textwidth}
		\includegraphics[width = 1\textwidth]{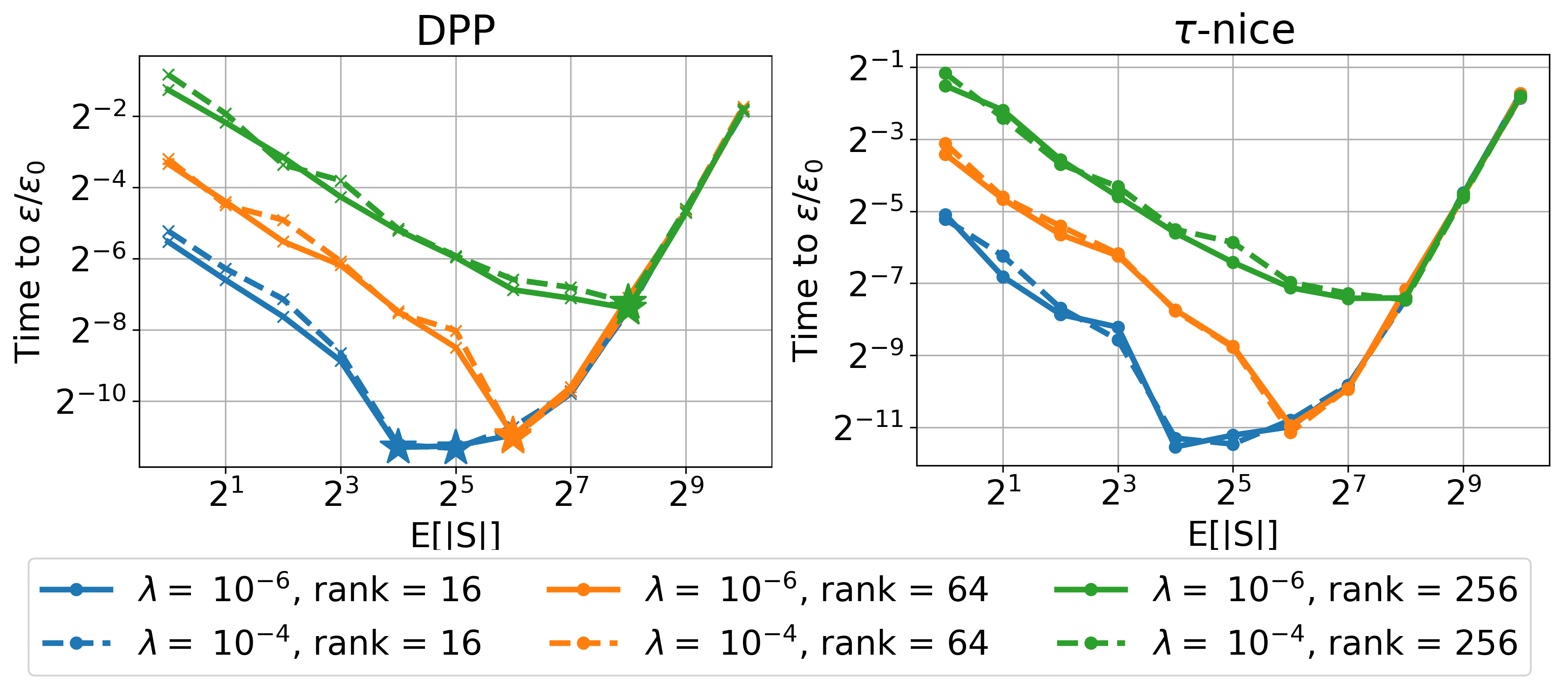}
                \vspace{-3mm}
		\caption{Sparse spectrum, rank of $\bK$ shown.}
		\label{fig:sparse}
	\end{subfigure}
	\begin{subfigure}[t]{0.49\textwidth}
\includegraphics[width = 1\textwidth]{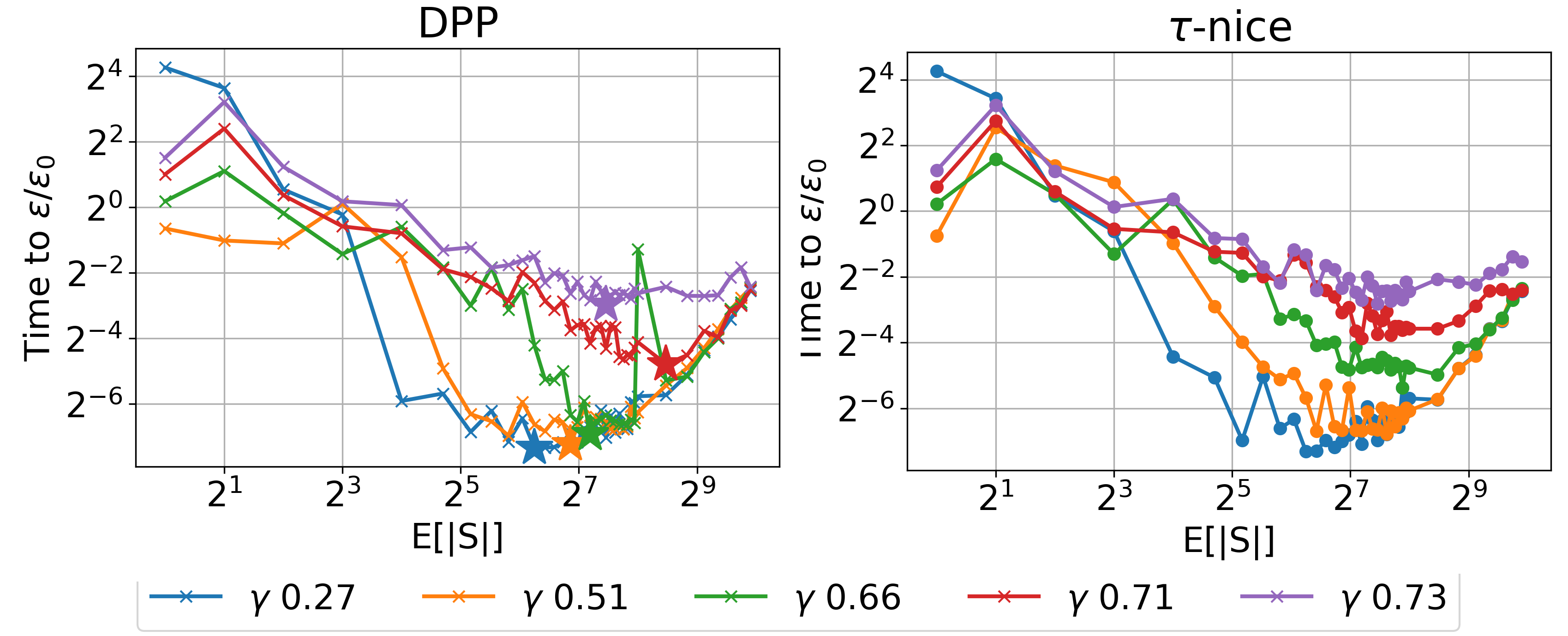}
\vspace{-3.5mm}
\caption{Exponential decay, varying lengthscale $\gamma$, for $\lambda = 10^{-7}$}
		\label{fig:gamma_exp}
	\end{subfigure}

	\caption{For Gaussian data, RNM exhibits
          similar behavior under DPP and uniform ($\tau$-nice) samplings.}
\end{figure*}

	\subsection{Sparse spectrum}
Suppose that only $s$ out of the $d$ eigenvalues are
        relatively large, while the remaining ones are very
        small. This scenario occurs with a linear kernel where 
        the number of large eigenvalues corresponds to the number of features,
        and the remaining ones are proportional to the
        regularization parameter $\lambda$.
	
	\paragraph{Complexity} For the ease of exposition, let the
        large eigenvalues all be equal to $\mu \gg \lambda$. 
        Lemma~\ref{eq:expected_size} implies that if $\alpha = \sum_{i> k-1}^{d}
        \lambda_i$ then $\mE[|\hat S|] \leq k$. 
	The convergence rate can be split to two cases:
	\begin{equation*}
	1- \sigma(\hat{S}) = \begin{cases}
	\frac{d-k}{d-k+1} & \text{when } k \in [s,d-1],\\
	1 - \frac{\lambda}{\mu}\frac{1}{d-k} + \mO\left(\frac{\lambda^2}{\mu^2}\right)& \text{when } k \in [0,s).
	\end{cases}
	\end{equation*}
	We see that once $k\geq s$ a discontinuity in the spectrum
        implies a discontinuity in the convergence rate. Consequently,
        the \emph{optimal subset size is of the order of $s$} as long as
        $\frac{\lambda}{\mu}$ is sufficiently small.

\section{EXPERIMENTS}
We numerically validate the theoretical findings from the previous sections.
Our main objective is to demonstrate that the convergence behavior of
RNM under DPP sampling aligns well with the behavior of RNM under
uniform sampling (called $\tau$-nice), which is more commonly used. This
would suggest that our convergence analysis under DPP sampling is also
predictive for other standard samplings. 
In addition to providing evidence for this claim, we also show that
there are cases where DPP sampling leads to superior performance of RNM.  

Even though there exist efficient algorithms for DPP sampling, we chose
to use approximate ridge leverage score sampling as a cheaper proxy for DPP
sampling, as suggested in a recent line of work
\citep{dpp-intermediate,dpp-sublinear}. The real data experiments were
performed with sampling according to the $\frac{1}{2}$-approximate
ridge leverage scores \citep{Calandriello2017}. We always report the mean value of $10$ reruns of the experiment with the given parameters. 

\paragraph{Gaussian Data}
The first experiment deals with data sampled from a Gaussian
distribution. The optimization using a kernel $\bK$ with sparse
spectrum (Figure \ref{fig:sparse}) verifies the theoretical findings
that the optimal block size should be of the same order as
$\operatorname{rank}(\bK)$. 
Using similarly generated data, and the relation in
\eqref{eq:se_decay} to relate lengthscale $l$ and $\gamma$ of squared
exponential kernel, we reproduce the prediction of the theory that for
sharper decays the optimal expected size should be larger (see Figure
\ref{fig:gamma_exp}, compared with theory, Figure \ref{fig:gamma}). The
performance of DPP and uniform sampling 
is on par as the intuition suggests, since for normally distributed
data even a uniform subsample provides good summary statistics. 
\paragraph{Gaussian Mixture Data}

\begin{figure}
	\centering
		\includegraphics[width = 0.49\textwidth]{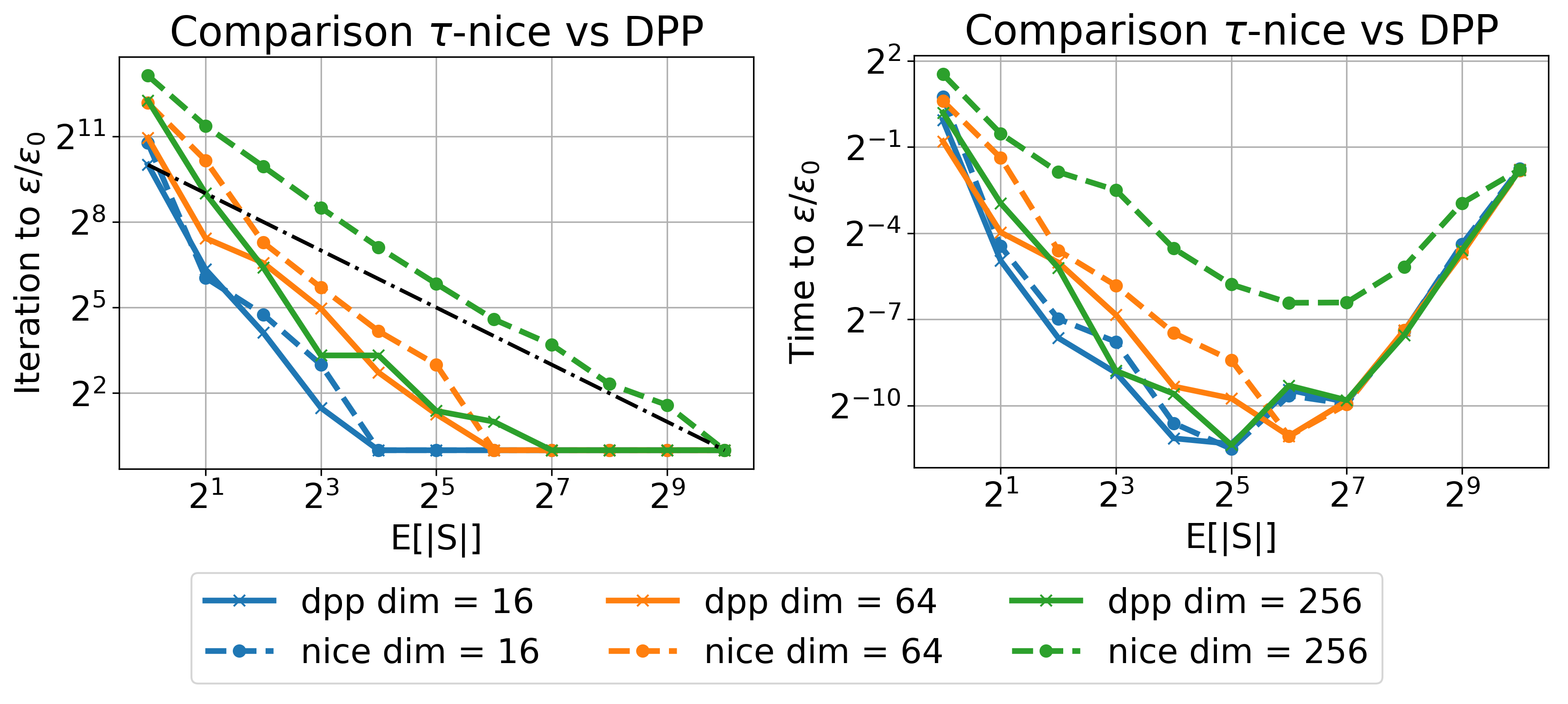}
		\caption{Gaussian mixture data - sparse spectrum, rank of kernel $\bK$ shown.}
		\label{fig:cluster}
\end{figure}
Akin to results from the sketching literature
\citep[e.g., see][]{unbiased-estimates-journal}, we suspect that the 
superior convergence of DPP sampling over uniform presents itself primarily
if the dataset is heterogeneous. By heterogeneity we mean that a
uniform subsampling of the points is likely not a 
good summary of the dataset. Consider a dataset where the points
are sampled from a Gaussian Mixture Model with $8$ clusters that are
equally likely. In order to have a good summary, a point from each
cluster should be present in the sample. DPP samples are generally
more diverse than uniform samples which makes it more likely that they
will cover all the clusters. In Figure \ref{fig:cluster}, we see that
DPP significantly outperforms uniform sampling for this dataset because it allows RNM 
to solve more representative subproblems. 

\paragraph{Real Data Experiments}
We perform two real data experiments on standard UCI datasets where we
optimize until statistical precision. In Figure~\ref{fig:real}a, we
optimize linear ridge regression on the \emph{superconductivity}
dataset. Next, in Figure \ref{fig:real}b we fit
kernel ridge regression with squared exponential kernel on the
\emph{cpusmall} dataset. For both datasets, the optimal subset size
under DPP sampling roughly matches the optimal size under uniform
sampling. Moreover, in the case of the superconductivity dataset, as
suggested by the theory for linear kernels, the optimal size is of
the same order as the feature dimensionality.
	\begin{figure}
		\centering
		\begin{subfigure}[t]{0.23\textwidth}
			\includegraphics[width = 1\textwidth]{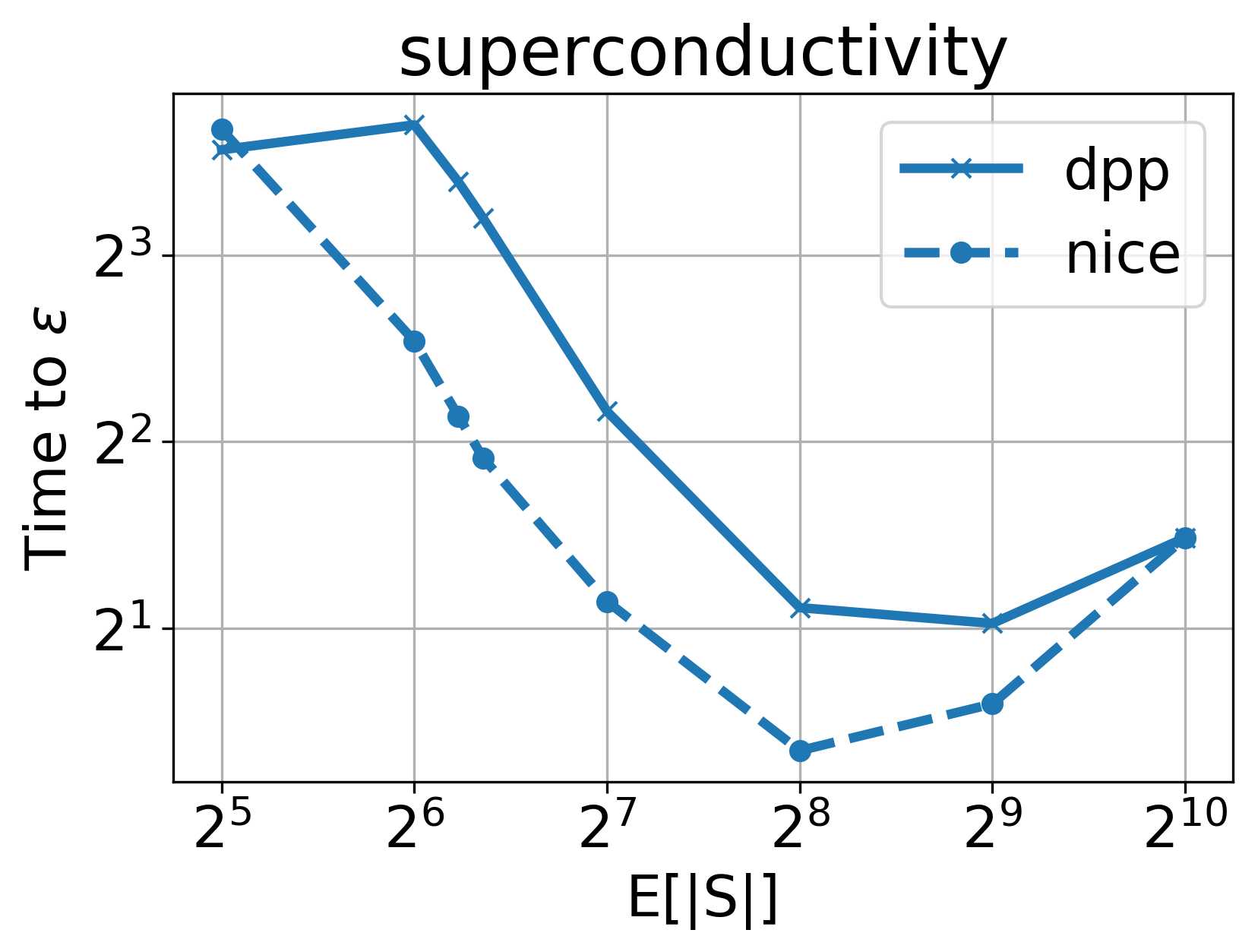}
			\caption{$n=1460$ and $d = 81$.}
			\label{fig:house}
		\end{subfigure}
		\begin{subfigure}[t]{0.23\textwidth}
			\includegraphics[width = 1\textwidth]{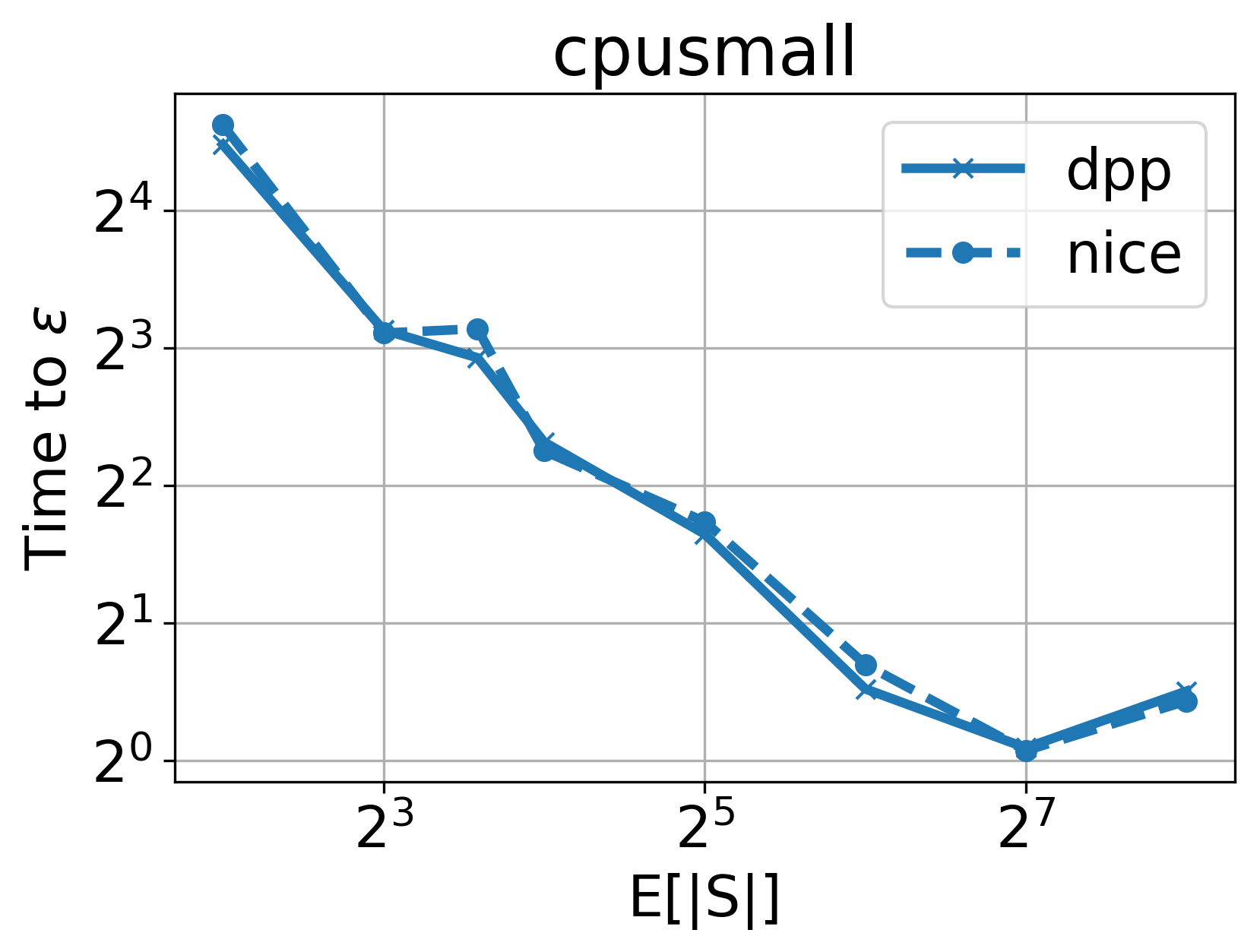}
			\caption{$n=2191$ and $d = 12$.}
			\label{fig:cpusmall}
                      \end{subfigure}
                      \vspace{2mm}
                      \caption{Experiments on real data.}
                      \vspace{1mm}
		\label{fig:real}
	\end{figure}

\section{CONCLUSION}
We analyzed a sampling strategy for the Randomized Newton Method, where
coordinate blocks of the Hessian over-approximation are sampled
according to their determinant. This sampling allows for a simple
interpretation of the convergence rate of the algorithm, which was
previously not well understood. We demonstrated that for empirical
risk minimization this convergence analysis allows us to predict the
optimal size for the sampled coordinate blocks in order to minimize
the total computational cost of the optimization. 

\paragraph{Acknowledgments} This work was supported by SNSF grant
407540\_167212 through the NRP 75 Big Data program. Also, MD thanks
the NSF for funding via the NSF TRIPODS program. 
\bibliography{pap.bib}
\newpage
\appendix
\onecolumn

\begin{center}
	\textbf{\large \underline{Supplementary Material:}\\ \underline{Convergence Analysis of Block Coordinate Algorithms}\\
			 \underline{with Determinantal Sampling}}
\end{center}

\section{PROOFS}

\subsection{DPPs}

\begin{proof}[Proof of Theorem \ref{t:main}]
	First, assume that $\alpha=1$. Since $\bM\succ \mathbf{0}$, we have $\det(\bM_{SS})>0$ for
	all $S\subseteq[d]$. We will next use the following
	standard determinantal formula which holds for any $v\in\mR^d$ and any
	invertible matrix $\bM$:
	\begin{align}
	\det(\bM)v^\top\bM^{-1}v = \det(\bM+vv^\top)
	- \det(\bM).\label{eq:pr0}
	\end{align}
	Applying this formula to the submatrices of $\bM$ and denoting by $v_S$
	the sub-vector of $v$ indexed by $S$, we show that for any $v\in\mR^d$:
	\begin{align*}
	v^\top
	&\mE\big[(\bM_S)^+
	\big]v =
	\sum_{S\subseteq[d]}\frac{\det(\bM_{SS})}{\det(\bI+\bM)}v_S^\top\bM_{SS}^{-1}v_S\\
	{\scriptsize\eqref{eq:pr0}}\ 
	&=\sum_{S\subseteq[d]}\frac{\det(\bM_{SS}+v_Sv_S^\top)-\det(\bM_{SS})}{\det(\bI+\bM)}\\
	&=\frac{\sum_S \det([\bM+vv^\top]_{SS}) - \sum_S\det(\bM_{SS})}{\det(\bI+\bM)}\\
	\text{\scriptsize(Lemma~\ref{l:normalization})}\
	&=\frac{\det(\bI+\bM+vv^\top) - \det(\bI+\bM)}{\det(\bI+\bM)}\\ 
	{\scriptsize\eqref{eq:pr0}}\
	&=\frac{\det(\bI+\bM)\,v^\top(\bI+\bM)^{-1}v}{\det(\bI+\bM)} \\
	&=
	v^\top(\bI+\bM)^{-1}v. 
	\end{align*}
	Since the above holds for all $v$, the equality also holds for the pd.
	matrices. To obtain the result with $\alpha\neq 1$, it suffices to
	replace $\bM$ with $\frac1\alpha\bM$.
\end{proof}

\begin{proof}[Proof of Lemma \ref{lemma:lambdas_exp}]
	The eigenvalues of $\bM(\alpha\bI+\bM)^{-1}$ are
	$\frac{\lambda_i}{\lambda_i+\alpha}$ so
	\begin{align*}
	\mE[|S|] & =  \sum_{i=1}^{d} \frac{\lambda_i}{\lambda_i + \alpha } =
	\sum_{i=1}^{d} \frac{\lambda_i}{\lambda_i + \sum_{j\geq k} \lambda_j } \\
	& = \sum_{i < k}^{d} \frac{\lambda_i}{\lambda_i +
		\sum_{j\geq k} \lambda_j } + \sum_{i \geq k}^{d}
	\frac{\lambda_i}{\lambda_i + \sum_{j\geq k} \lambda_j }\\
	&<  (k-1)+ 1 = k,
	\end{align*}
	which concludes the proof.
\end{proof}

\subsection{Convergence Analysis}
\begin{proof}[Proof of Theorem \ref{corr:recurence}]
	\begin{eqnarray}
	\sigma_1 & \stackrel{\eqref{eq:sigma_1}}= & \lambda_{\min}\left( \bM^{1/2} \left({\alpha}\bI +  \bM\right)^{-1} \bM^{1/2}  \right) \\
	& = & \lambda_{min} \left( \left( {\alpha}\bM^{-1} + \bI \right)^{-1} \right)\\
	& = & \frac{1}{\lambda_{\max}\left( 
		{\alpha} \bM^{-1} + \bI  \right)} = \frac{1}{1 + {\alpha}\lambda_{\max}(\bM^{-1})} \\
	& = & \frac{ \mu }{ \mu + \alpha} \\
	\end{eqnarray}
	where $\mu = \lambda_{\min}(\bM)$.
\end{proof}

\begin{proof}[Proof of Proposition \ref{prop:recurence}]
	By definition,
\[	\frac{1}{\sigma(k+1)} = 1 + \frac{\sum_{i>k}^{d}\lambda_i}{\lambda_d} = 1 + \frac{\sum_{i>{k-1}}^{d}\lambda_i - \lambda_{k}}{\lambda_d} = \frac{1}{\sigma(k)} - \frac{\lambda_{k}}{\lambda_d} \]
Rearranging,
\[ \frac{1}{\sigma(k)} = \frac{1}{\sigma(k+1)} +  \frac{\lambda_{k}}{\lambda_d}  \implies \sigma(k) = \frac{\sigma(k+1)\lambda_d}{\lambda_d + \lambda_k \sigma(k+1)}  \]
Dividing the denominator and the numerator by $\lambda_d$ finishes the proof. 

\end{proof}
%
%
%

\subsection{Dual convergence rate}
The dual convergence rate established in \cite{Qu2015Feb} relies on the
notion of expected separable over-approximation. Namely, the existence of
$v \in \mR^d$ s.t. $\mE[\bM_S] \preceq \bD(p \circ v)$, where $p$ is
the vector of marginal probabilities. In case of DPP sampling, one can
choose $v = \diag(\bM)\circ \diag(\bM(\bM + \alpha \bI)^{-1})^{-1}$,
and apply dual convergence results established in this literature. By
$\circ$ we denote element-wise product.

%
%
%
%
%
%
%
%
%
%

\section{LEVERAGE SCORE SAMPLING VS DPP SAMPLING}
\label{a:leverage}
We perform a simple experiment on the  Gaussian Mixtures dataset where the matrix has a sparse spectrum. In Figure \ref{fig:comparison} we see that the optimization process is influenced minimally. 

\begin{figure}[H]
	\centering
	\includegraphics[width = 0.5\textwidth]{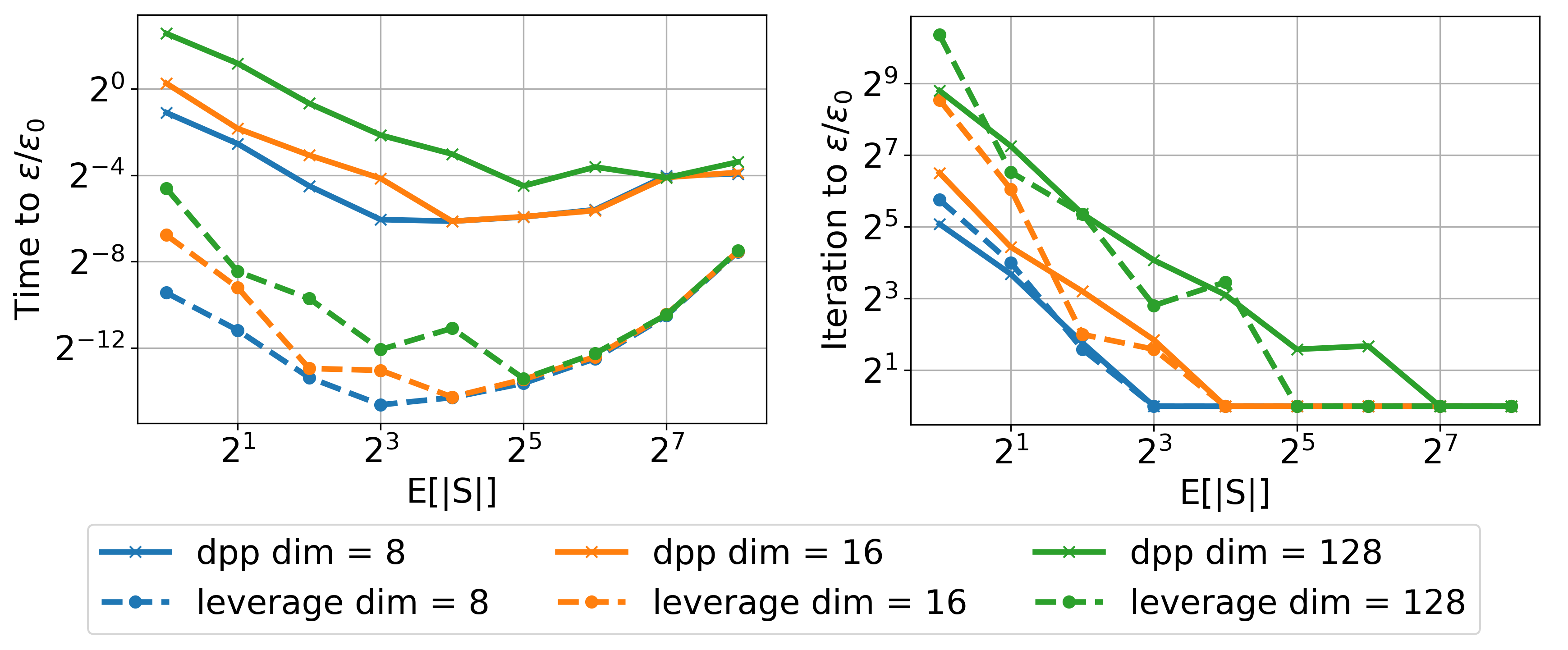}
	\caption{Comparison of leverage score sampling and DPP}
	\label{fig:comparison}
\end{figure}

\section{RELATIVE SMOOTHNESS AND RELATIVE STRONG CONVEXITY}\label{appendix:relative}
Recent works such as \citep{Gower2019} and \citep{Karimireddy2018a} introduce the concepts of relative-smoothness, relative strong convexity and $c$-stability. These are weaker conditions than assumed in this paper. With these conditions, the proof techniques used to analyze coordinate descent algorithms are applicable to Newton-like algorithms, where instead of a fixed matrix $\bM$, the actual Hessian $\bH(x)$ can be used. The extension to $c-$stability is done trivially in Theorem 2 of \cite{Karimireddy2018a}, here we focus on a slightly more elaborate connection with relative smoothness and relative strong-convexity. 

\begin{assumption}[\cite{Gower2019}]\label{def:assumption}
	There exists a constant $\tilde{L}\geq\tilde{\mu}$ such that for all $x,y \in \mQ \subseteq \mR^d$, where $\mQ:=\{x \in \mR^d : f(x)\leq f(x_0)\}$:
	\begin{equation}\label{eq:rsmooth}
			f(x) \leq f(y) + \braket{\nabla f(y),x} + \frac{\tilde{L}}{2}\norm{x-y}_{\bH(y)}
	\end{equation}
	and 
	\begin{equation}\label{eq:rconvex}
	f(x) \geq f(y) + \braket{\nabla f(y),x} + \frac{\tilde{\mu}}{2}\norm{x-y}_{\bH(y)}.
	\end{equation}
\end{assumption}

Now the task is to analyze the algorithm with the following update rule, which is identical to general Newton rule when $S = [d]$,

\begin{equation}\label{eq:update2}
x_{k+1} = x_k - \gamma (\bH(x_k)_{S_k})^+ \nabla f(x_k).
\end{equation}

We fix a particular choice of $\gamma = 1/\tilde{L}$. This should be contrasted with the update rule $\eqref{eq:update}$. 

Now given these assumption, we are able to show that the constant akin to $\sigma(\hat{S})$ appears in the analysis of this algorithm by utilizing the notions from \citep{Gower2019}. We sacrifice generality for the sake of brevity, and assume that range of $\bH(x)$ spans whole $\mR^d$ for each $x \in \mQ$. Then, the following quantities resembling $\sigma(\hat{S})$ appear in the convergence analysis of the update rule \eqref{eq:update2}

\begin{equation}\label{eq:sigmahat}
 \hat{\sigma}(\hat{S},x) = \lambda_{\min} \left( \mE_{\hat{S}} \left[ \bH^{1/2}(x)(\bH(x)_{\hat{S}})^+ \bH(x)^{1/2}  \right] \right)
\end{equation}

and \[\hat{\sigma}(\hat{S}) = \min_{x\in \mQ } \hat{\sigma}(\hat{S},x) \]

\begin{theorem}[Theorem 3.1 of \cite{Gower2019}, modified]\label{thm:rel}
 	Let $f$ satisfy Assumption \ref{def:assumption}, and let $\bH(x)$ be the Hessian at $x$ having range that spans whole $\mR^d$ for all $x$. Then 
	\[ \mE_{\hat{S}}[ f(x_{k+1}) - f(x^*) ] \leq  \left( 1 - \frac{\hat{\sigma}(\hat{S},x_k)\mu}{L}  \right) (f(x_k) - f(x^*), \]
	and
	\[ \mE_{\hat{S}}[ f(x_{k}) - f(x^*) ] \leq  \left( 1 - \frac{\hat{\sigma}(\hat{S})\mu}{L}  \right)^k (f(x_0) - f(x^*), \]
	where $ \hat{\sigma}(\hat{S}) = \min_{x\in \mQ } \hat{\sigma}(\hat{S},x)$ as in Equation \eqref{eq:sigmahat}.
\end{theorem}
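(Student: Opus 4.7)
The plan is to combine the relative smoothness and relative strong convexity assumptions with a key pseudoinverse identity, mirroring the SDNA-style analysis of Lemma~\ref{theorem:1} but with the fixed majorant $\bM$ replaced by the local Hessian $\bH(x_k)$. Write $g := \nabla f(x_k)$, $\bH := \bH(x_k)$, and $h := x_{k+1} - x_k = -\tilde{L}^{-1}(\bH_{\hat S})^+ g$. Relative smoothness \eqref{eq:rsmooth} (read with the standard correction $\|x-y\|^2_{\bH(y)}$ in the quadratic term) applied with $y=x_k$ and $x=x_{k+1}$ gives
\[
f(x_{k+1}) \leq f(x_k) + \langle g,h\rangle + \frac{\tilde{L}}{2}\|h\|_{\bH}^2.
\]

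The crucial identity, verified by expanding $(\bH_{\hat S})^+ = \bI_{:\hat S}(\bH_{\hat S\hat S})^{-1}\bI_{:\hat S}^\top$ and noting that $\bI_{:\hat S}^\top \bH\, \bI_{:\hat S} = \bH_{\hat S\hat S}$, is
\[
(\bH_{\hat S})^+\, \bH\, (\bH_{\hat S})^+ \;=\; (\bH_{\hat S})^+.
\]
This collapses the quadratic term to $\|h\|_{\bH}^2 = \tilde{L}^{-2}\, g^\top(\bH_{\hat S})^+ g$, and combined with $\langle g,h\rangle = -\tilde{L}^{-1}\, g^\top(\bH_{\hat S})^+ g$ yields the per-step descent
\[
f(x_{k+1}) \leq f(x_k) - \frac{1}{2\tilde{L}}\, g^\top(\bH_{\hat S})^+ g.
\]

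Next I would take expectation over $\hat S$ and invoke the definition \eqref{eq:sigmahat}: since $\bH^{1/2}\mE_{\hat S}[(\bH_{\hat S})^+]\bH^{1/2} \succeq \hat\sigma(\hat S,x_k)\,\bI$, left- and right-multiplying by $\bH^{-1/2}$ (which exists by the full-range assumption on $\bH(x)$) gives $\mE_{\hat S}[(\bH_{\hat S})^+] \succeq \hat\sigma(\hat S,x_k)\,\bH^{-1}$. To translate this into a suboptimality contraction, I would apply relative strong convexity \eqref{eq:rconvex} at $y=x_k$ and minimize the resulting quadratic lower bound over $x$; its minimizer is $x = x_k - \tilde\mu^{-1}\bH^{-1} g$, producing $f(x^*) \geq f(x_k) - \frac{1}{2\tilde\mu}\, g^\top \bH^{-1} g$, i.e.\ $g^\top \bH^{-1} g \geq 2\tilde\mu\bigl(f(x_k)-f(x^*)\bigr)$. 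Chaining these estimates and subtracting $f(x^*)$ gives the one-step bound
\[
\mE_{\hat S}\!\bigl[f(x_{k+1})-f(x^*)\bigr] \leq \Bigl(1 - \tfrac{\hat\sigma(\hat S,x_k)\,\tilde\mu}{\tilde{L}}\Bigr)\bigl(f(x_k)-f(x^*)\bigr),
\]
which is the first claim (the $\mu/L$ in the statement being read as $\tilde\mu/\tilde{L}$ in the notation of Assumption~\ref{def:assumption}). The iterated bound then follows by lower-bounding $\hat\sigma(\hat S,x_k) \geq \hat\sigma(\hat S) = \min_{x\in\mQ}\hat\sigma(\hat S,x)$, taking total expectations, and unrolling $k$ applications of the one-step contraction.

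The main obstacle I expect is establishing the pseudoinverse identity $(\bH_{\hat S})^+\bH(\bH_{\hat S})^+ = (\bH_{\hat S})^+$: it is not a generic pseudoinverse fact but relies on the specific block structure in \eqref{eq:slice}, namely that $\bH_{\hat S}$ is obtained by zeroing out all entries of $\bH$ outside the principal $\hat S$-block, so the sandwiched $\bH$ reduces to $\bH_{\hat S\hat S}$ when conjugated by $\bI_{:\hat S}$. Once this structural step is in place, the rest is a routine chaining of smoothness (for descent), strong convexity (to convert gradient norm into suboptimality), and the spectral definition of $\hat\sigma$; compared with Lemma~\ref{theorem:1} the only change is that the constant matrix $\bM$ is replaced point-wise by $\tilde{L}\,\bH(x_k)$ from above and $\tilde\mu\,\bH(x_k)$ from below, and correspondingly $\sigma$ is replaced by $\hat\sigma$.
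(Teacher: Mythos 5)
Your proposal is correct and follows essentially the same route as the paper's proof: the paper's terse step ``minimizing the upper bound in \eqref{eq:rsmooth} restricted to coordinates in $S_k$'' is exactly your explicit computation (the identity $(\bH_{\hat S})^+\bH(\bH_{\hat S})^+=(\bH_{\hat S})^+$ is what makes the minimum value equal $-\frac{1}{2\tilde L}\,g^\top(\bH_{\hat S})^+g$), and the subsequent use of relative strong convexity together with the spectral definition of $\hat\sigma$ is identical. You correctly identify the paper's notational slips ($\mu/L$ for $\tilde\mu/\tilde L$, and the missing square on the $\bH(y)$-norm), and your write-up simply fills in the details the paper leaves implicit.
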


\begin{proof}
	Minimizing the upper bound in \eqref{eq:rsmooth} restricted to coordinates in $S_k$, we arrive at, 
	\begin{eqnarray*}
	f(x_{k+1}) - f(x_k) & \stackrel{\eqref{eq:smooth}} \leq & - \frac{1}{2\tilde{L}} \braket{\nabla f(x_k), (\bH(x_k)_{S_k})^+  \nabla f(x_k)} \\
	\mE[f(x_{k+1}) - f(x_k)] & \leq &  - \frac{1}{2\tilde{L}} \braket{\nabla f(x_k),\mE_{\hat{S}}[(\bH(x_k)_{S_k})^+]  \nabla f(x_k)}\\
	& \stackrel{\eqref{eq:rconvex}, \eqref{eq:sigmahat}} \leq & -\frac{\mu}{L}\hat{\sigma}(\hat{S},x) (f(x_k) - f(x^*))\\
	& \leq & -\frac{\mu}{L}\hat{\sigma}(\hat{S}) (f(x_k) - f(x^*))\\
	\end{eqnarray*}
	rearranging finishes the proof. 
\end{proof}

The following corollary states that with DPP sampling, the update rule in \eqref{eq:update2} can have a more interpretable convergence rate than stated in the Theorem \ref{thm:rel}. 

\begin{corollary}[of Theorem \ref{thm:rel}]
Under the assumption of Theorem \ref{thm:rel}, let additionally $S_k$ be a sample from sampling $\hat{S}_k \sim \DPP(\frac{1}{\alpha} \bH(x_k))$, then 
		\[ \mE_{\hat{S}_k}[ f(x_{k+1}) - f(x^*) ] \leq  \left( 1 - \left(\frac{\lambda(x_k) }{\lambda(x_k)  + \alpha}\right)\frac{\mu}{L}  \right) (f(x_k) - f(x^*), \]
		where $\lambda(x_k) = \lambda_{\min}(\bH(x_k))$. 
\end{corollary}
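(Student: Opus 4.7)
The plan is to combine Theorem \ref{thm:rel} with our new DPP expectation formula (Theorem \ref{t:main}), applied pointwise at the current iterate $x_k$. Since the sampling $\hat{S}_k$ depends only on $\bH(x_k)$ and is drawn independently of earlier randomness given $x_k$, the conditional expectation over $\hat S_k$ treats $\bH(x_k)$ as a fixed p.d.\ matrix (full-range by assumption). That is exactly the setting in which Theorem \ref{t:main} applies.

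Concretely, I would proceed as follows. First, invoke the one-step form of Theorem \ref{thm:rel} to obtain
\begin{equation*}
\mE_{\hat S_k}[f(x_{k+1})-f(x^*)] \leq \Bigl(1 - \tfrac{\mu}{L}\,\hat\sigma(\hat S_k, x_k)\Bigr)\bigl(f(x_k)-f(x^*)\bigr),
\end{equation*}
where $\hat\sigma(\hat S_k, x_k) = \lambda_{\min}\bigl(\bH(x_k)^{1/2}\,\mE_{\hat S_k}[(\bH(x_k)_{\hat S_k})^+]\,\bH(x_k)^{1/2}\bigr)$ by \eqref{eq:sigmahat}. Next, apply Theorem \ref{t:main} with the p.d.\ matrix $\bM = \bH(x_k)$ and the DPP sample $\hat S_k \sim \DPP(\tfrac1\alpha \bH(x_k))$ to conclude
\begin{equation*}
\mE_{\hat S_k}\bigl[(\bH(x_k)_{\hat S_k})^+\bigr] = \bigl(\alpha\bI + \bH(x_k)\bigr)^{-1}.
\end{equation*}

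Substituting this identity into $\hat\sigma$, I would then repeat the eigenvalue manipulation used in the proof of Theorem \ref{corr:recurence}: the matrix $\bH(x_k)^{1/2}(\alpha\bI+\bH(x_k))^{-1}\bH(x_k)^{1/2}$ is similar to $(\alpha\bH(x_k)^{-1}+\bI)^{-1}$, whose smallest eigenvalue is $(1+\alpha\lambda_{\max}(\bH(x_k)^{-1}))^{-1} = \lambda(x_k)/(\lambda(x_k)+\alpha)$ with $\lambda(x_k)=\lambda_{\min}(\bH(x_k))$. Plugging this value back into the one-step contraction yields exactly the stated bound.

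There is no serious obstacle here: the argument is a direct composition of the two results. The only point requiring care is to verify that Theorem \ref{t:main} is being applied with a strictly positive definite matrix — this is where the full-range assumption on $\bH(x)$ (combined with symmetry and p.s.d.) is needed — and to note that the bound on $\hat\sigma(\hat S_k, x_k)$ depends on the random iterate $x_k$ through $\lambda(x_k)$, which is why the corollary is phrased as a one-step contraction rather than an iterated rate (obtaining the iterated rate would require a uniform lower bound on $\lambda(x_k)$ over $\mQ$, analogous to the $\hat\sigma(\hat S)$ in Theorem \ref{thm:rel}).
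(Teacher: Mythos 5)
Your proposal is correct and is exactly the argument the paper intends (the paper leaves this corollary without an explicit proof precisely because it is the direct composition of the one-step bound in Theorem \ref{thm:rel} with the expectation formula of Theorem \ref{t:main}, followed by the same eigenvalue computation as in Theorem \ref{corr:recurence}). Your side remarks — that the full-range assumption is what licenses applying Theorem \ref{t:main} to the p.d.\ matrix $\bH(x_k)$, and that the dependence of $\lambda(x_k)$ on the iterate is why only a one-step contraction is stated — are both accurate.
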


The following lemma relates the complexity quantity defined above to the definition of $\sigma(\hat{S})$ used in the main body of this paper. Note that $\hat{\sigma}$ is larger than $\sigma$, even if the fixed over-approximation exists, as previously we assumed the over-approximation to be valid globally not just in  $\mQ$.

\begin{lemma} If for all $x \in \mQ$, $\bM \succeq \bH(x) \succeq \kappa \bM \succ 0$, then
	\[ \hat{\sigma}(\hat{S}) \geq \kappa \sigma(\hat{S}).\]
	The relative smoothness, and strong-convexity can be chosen to be $\tilde{L} = 1$, and $\tilde{\mu} = 1$, respectively. 
\end{lemma}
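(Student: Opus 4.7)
The plan is to prove the inequality $\hat\sigma(\hat S, x) \geq \kappa \sigma(\hat S)$ pointwise for each $x \in \mQ$ and then minimize over $x$. Two ingredients will drive the argument: monotonicity of the pseudoinverse under the Loewner order, restricted to the sparsified block matrices from \eqref{eq:slice}, and the generalized Rayleigh-quotient rewriting $\lambda_{\min}(\bA^{1/2}\bX\bA^{1/2}) = \min_w (w^\top \bX w)/(w^\top \bA^{-1} w)$, obtained by the substitution $w = \bA^{1/2}v$ for positive definite $\bA$ and p.s.d.\ $\bX$.

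Starting from $\bM \succeq \bH(x) \succeq \kappa \bM \succ \mathbf{0}$, I would first pass to principal submatrices: for each $S \subseteq [d]$ we get $\bM_{SS} \succeq \bH(x)_{SS} \succeq \kappa \bM_{SS}$. Since all three are positive definite, inversion reverses the order, and padding back via \eqref{eq:slice} (both embeddings use the same $\bI_{:S}$) yields
\[
(\bM_{\hat S})^+ \preceq (\bH(x)_{\hat S})^+ \preceq \tfrac{1}{\kappa}(\bM_{\hat S})^+ .
\]
Taking expectations preserves the Loewner order, and the same reasoning applied to the full matrices gives $\bH(x)^{-1} \preceq \tfrac{1}{\kappa}\bM^{-1}$. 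Next, I would apply the Rayleigh rewriting to $\hat\sigma(\hat S, x)$ (with $\bA = \bH(x)$) and to $\sigma(\hat S)$ (with $\bA = \bM$). For every $w$,
\[
\frac{w^\top \mE[(\bH(x)_{\hat S})^+] w}{w^\top \bH(x)^{-1} w} \;\geq\; \frac{w^\top \mE[(\bM_{\hat S})^+] w}{\tfrac{1}{\kappa}\,w^\top \bM^{-1} w} \;=\; \kappa \cdot \frac{w^\top \mE[(\bM_{\hat S})^+] w}{w^\top \bM^{-1} w},
\]
where the numerator uses $\mE[(\bH(x)_{\hat S})^+] \succeq \mE[(\bM_{\hat S})^+]$ and the denominator uses $\bH(x)^{-1} \preceq \tfrac{1}{\kappa}\bM^{-1}$. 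Minimizing over $w$ and then over $x \in \mQ$ delivers $\hat\sigma(\hat S) \geq \kappa \sigma(\hat S)$.

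For the auxiliary claim about the constants, Assumption \ref{ass:smooth} is precisely the relative-smoothness inequality \eqref{eq:rsmooth} with $\bM$ playing the role of $\bH(y)$ and $\tilde L = 1$; the strong-convexity counterpart likewise reads off from Assumption \ref{ass:strongconvex} with the $\kappa$ factor, and since that same $\kappa$ is already extracted in the main inequality $\hat\sigma \geq \kappa\sigma$, the convergence template of Theorem \ref{thm:rel} can be quoted with $\tilde L = \tilde \mu = 1$ without loss. The main delicate point in the chain above is the pseudoinverse monotonicity for the sparsified blocks: $\bM_{\hat S}$ and $\bH(x)_{\hat S}$ are rank-deficient in $\mR^d$, so I would explicitly note that they share the same column space $\operatorname{range}(\bI_{:\hat S})$, which reduces the comparison to ordinary inversion monotonicity of the p.d.\ principal blocks $\bM_{\hat S \hat S}$ and $\bH(x)_{\hat S \hat S}$ — at which point everything is standard.
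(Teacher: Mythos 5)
Your proof is correct, and it does not follow the paper's route --- in fact it quietly repairs a gap in the paper's own argument. The paper writes $\hat{\sigma}(\hat{S})$ as a double minimum over $x$ and $v$ of $\braket{v,\mE[\bH^{1/2}(x)(\bH(x)_{\hat{S}})^+\bH(x)^{1/2}]v}/\norm{v}_2^2$, swaps the minima, and then asserts, for each fixed $v$, the comparison $\braket{v,\mE[\bH^{1/2}(x)(\bH(x)_{\hat{S}})^+\bH(x)^{1/2}]v}\geq \kappa\braket{v,\mE[\bM^{1/2}(\bM_{\hat{S}})^+\bM^{1/2}]v}$. That step is left unjustified, and as stated it is false in general: each summand $\bM^{1/2}(\bM_{S})^+\bM^{1/2}$ is the orthogonal projection onto $\operatorname{range}(\bM^{1/2}\bI_{:S})$, and projections onto different subspaces are not Loewner-comparable. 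Concretely, take $d=2$, $\bH(x)\equiv\bI$, and $\bM$ with diagonal entries $2$ and off-diagonal entries $1$ (so the hypothesis holds with $\kappa=1/3$); if $\hat{S}=\{1\}$ with probability $1-\epsilon$ and $\{2\}$ with probability $\epsilon$, then at the second standard basis vector $v=e_2$ the left side equals $\epsilon$ while the right side tends to $\kappa\cdot\tfrac{2-\sqrt{3}}{4}>0$ as $\epsilon\to 0$, so the fixed-$v$ inequality fails for small $\epsilon$ (the lemma itself still holds there, since $\lambda_{\min}$ is attained at different vectors for the two matrices). Your substitution $w=\bA^{1/2}v$ sidesteps exactly this difficulty: it rewrites both quantities as $\min_w\, w^\top\mE[(\bA_{\hat{S}})^+]w/(w^\top\bA^{-1}w)$, after which numerator and denominator are each controlled by a legitimate Loewner monotonicity --- $\mE[(\bH(x)_{\hat{S}})^+]\succeq\mE[(\bM_{\hat{S}})^+]$ via the blockwise inversion argument you correctly spell out (same embedding $\bI_{:\hat S}$, so the padded pseudoinverses are comparable), and $\bH(x)^{-1}\preceq\tfrac1\kappa\bM^{-1}$. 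The one blemish you share with the paper is notational: with $\sigma(\hat{S})$ literally as in \eqref{eq:sigma_1}, which already carries a factor $\kappa$, what both arguments actually deliver is $\hat{\sigma}(\hat{S})\geq\kappa\,\lambda_{\min}\big(\bM^{1/2}\mE[(\bM_{\hat{S}})^{+}]\bM^{1/2}\big)$; under the evidently intended reading of $\sigma$ without that prefactor, your argument is complete and, unlike the paper's, fully rigorous.
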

\begin{proof}
\begin{eqnarray*}
	\hat{\sigma}(\hat{S}) & = & \min_{x\in \mQ } \min_{v \in \mR^d} \frac{ \braket{v,\mE_{\hat{S}} \left[ \bH^{1/2}(x)(\bH(x)_{\hat{S}})^+ \bH(x)^{1/2}  \right]v }}{{\norm{v}_2^2}} = \min_{v \in \mR^d} \min_{x\in \mQ}   \frac{ \braket{v,\mE_{\hat{S}} \left[ \bH^{1/2}(x)(\bH(x)_{\hat{S}})^+ \bH(x)^{1/2}  \right]v }}{\norm{v}_2^2} \\
	& \geq & \min_{v \in \mR^d}  \frac{ \braket{v,\mE_{\hat{S}} \kappa \left[ \bM^{1/2}(\bM_{\hat{S}})^+ \bM^{1/2}  \right]v }}{\norm{v}_2^2} = \kappa \sigma(\hat{S})
\end{eqnarray*}

\end{proof}

\section{OTHER SAMPLINGS}

The convergence properties of RNM with determinantal sampling depend solely on the spectral properties of $\bM$. This is not true of other common samplings such as $\tau$-nice. Indeed we can improve or worsen the performance of $\tau$-nice sampling when $\bM$ is transformed via spectrum preserving operation such as unitary transformation \[\bM \gets \bR^\top \bM \bR, \text{ where } \bR^\top \bR = \bI.\]

Suppose that we are given an eigenvalues of the matrix $\bM$, for any sampling $\hat{S}$ is it possible to find a spectrum preserving rotation such that $\sigma(\hat{S})$ is at least as small as $\sigma(\hat{S}_{\DPP})$ which corresponds to DPP sampling with the same expected cardinality? The answer turns out to be negative, and we show counter-example. 

\begin{remark}[Counter-example]
	Let $\hat{S}_1$ be a sampling such that $[n]$ is sampled with $1/2$ probability and $\emptyset$ and $1/2$ probability. The expected size of the subset $\mE[|\hat{S}_1|] = d/2$ and $\sigma(\hat{S}_1) = \frac{1}{2}$ irrespective of the matrix $\bM$.
	
	Suppose matrix $\bM$ has degenerate spectrum such that $\lambda$ is eigenvalue with multiplicity $d/2$ and $\mu$ is eigenvalue with $d/2$ multiplicity where $\lambda < \mu$. In order s.t. $\mE[|S_{\DPP}|] = \frac{d}{2}$, $\alpha = \sqrt{\lambda \mu}$, then $\sigma(\hat{S}_{\DPP}) < \frac{1}{2}$. 
\end{remark}
In what circumstances does DPP sampling perform better than a uniform sampling? First, we consider circumstances where uniform sampling is optimal. 

\subsection{Uniform sampling}
It is important to allow for variation in the off-diagonal of $\bM$. If we consider only diagonal $\bM$, the optimal sampling is uniform sampling. 
\begin{lemma} \label{lemma:uniform_optimal} Let $\bM$ be diagonal. The quantity $\sigma(\hat{S})$ of a sampling over a power set $P([d])$ constrained by $\mE[|\hat{S}|] = k$ is maximized for uniform samplings.
\end{lemma}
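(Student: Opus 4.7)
The plan is to exploit the fact that when $\bM$ is diagonal, every submatrix operation commutes with the coordinate projection, so $\sigma(\hat{S})$ reduces to a function of the marginal probabilities $p_i = \pP(i \in \hat{S})$ alone. This collapses the lemma to an elementary averaging argument.

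First I would observe that if $\bM = \operatorname{diag}(m_1,\ldots,m_d)$ with $m_i>0$, then for any $S\subseteq [d]$ the matrix $\bM_S$ is itself diagonal with entries $m_i \mathbb{1}[i\in S]$, hence $(\bM_S)^+$ is diagonal with entries $m_i^{-1}\mathbb{1}[i\in S]$. Consequently the conjugated matrix $\bM^{1/2}(\bM_S)^+\bM^{1/2}$ has $i$-th diagonal entry equal to $\sqrt{m_i}\cdot m_i^{-1}\mathbb{1}[i\in S]\cdot\sqrt{m_i}=\mathbb{1}[i\in S]$, and is zero off the diagonal. Taking expectations over $\hat S$ commutes with these entrywise operations, so
\begin{equation*}
\mE\bigl[\bM^{1/2}(\bM_{\hat S})^+\bM^{1/2}\bigr]=\operatorname{diag}(p_1,\ldots,p_d),\qquad p_i\eqdef\pP(i\in\hat S).
\end{equation*}
Therefore $\sigma(\hat S)=\kappa\,\lambda_{\min}(\operatorname{diag}(p_1,\ldots,p_d))=\kappa\min_i p_i$, depending on the sampling only through its marginals.

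Next, I would translate the cardinality constraint. By linearity, $\mE[|\hat S|]=\sum_{i=1}^d p_i=k$. Maximizing $\min_i p_i$ subject to $\sum_i p_i=k$ and $p_i\in[0,1]$ is a standard exercise: for any feasible $(p_i)$ one has $\min_i p_i\le \frac1d\sum_i p_i=\frac kd$, with equality forcing $p_1=\cdots=p_d=k/d$. Hence
\begin{equation*}
\sigma(\hat S)\le \kappa\cdot\frac{k}{d},
\end{equation*}
and the bound is attained precisely when $\hat S$ is uniform in the sense defined in the paper, i.e. $p_i=p_j$ for all $i,j$. (Note that $k\le d$ ensures $k/d\le 1$, so such marginals are realizable, for instance by $\tau$-nice sampling with $\tau=k$.)

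There is essentially no obstacle: the only thing worth being careful about is that the class of ``uniform samplings'' in the paper is defined through equal marginals, not through the full joint distribution. The argument above shows that every sampling with equal marginals, and only such samplings, achieves the maximum $\kappa k/d$, which matches the lemma's claim that the optimum is attained by uniform samplings (plural).
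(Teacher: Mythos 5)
Your proof is correct and follows essentially the same route as the paper's: reduce $\sigma(\hat S)$ for diagonal $\bM$ to $\kappa\min_i p_i$ via the observation that $\bM^{1/2}(\bM_{\hat S})^+\bM^{1/2}$ is the diagonal indicator matrix of $\hat S$, then maximize the minimum marginal under the budget $\sum_i p_i = k$. Your write-up is in fact slightly more careful than the paper's, making explicit the averaging bound $\min_i p_i \le k/d$ and the equality case.
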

\begin{proof}[Proof of Lemma \ref{lemma:uniform_optimal} ]
	We want to maximize the minimum eigenvalue of a matrix $\bM^{1/2} \mE[(\bM_S)^{-1}  ] \bM^{1/2}$. For a diagonal $\bM$ we know that $(\bM_S)^{-1} = (\bM^{-1})_S$. Hence, \(\bM^{1/2} \mE[(\bM_S)^{-1}  ] \bM^{1/2} \bD(p)\), where $p$ is a vector of marginals $p_i = P(i \in \hat{S})$. Hence, the minimum eigenvalue is the minimum marginal probability subject to a constraint that $\mE[|S|] = \sum_{j=1}^{d}P(j \in \hat{S}) \leq k$. This leads to an optimum where $P(i \in \hat{S}) = P(j \in \hat{S})$ for all $i,j \in [d]$. Hence the optimal sampling distribution is uniform. 
\end{proof}

\subsection{Parallel Sampling}
The parallel extension of the update method \ref{eq:update} has been considered in \cite{Mutny2018a} and \cite{Karimireddy2018a}. Namely, the authors consider a case, when the updates with $c$ machines are aggregated together to form a single update in the form $\approx \frac{1}{b}\sum_{j=1}^{c} (\bM_{S_j})^+$, where $b$ is the aggregating parameter. It is known that for parallel disjoint samplings the convergence rate increases linearly with the number of processors. For independent samplings the aggregating parameter $b$ depends on the quantity, 
\[ \theta(\hat{S}) = \lambda_{\max} (\bM^{1/2}\mE[(\bM_{\hat{S}})^+] \bM^{1/2} ) \]
which in the case of DPP sampling is equal to $\theta = \frac{\lambda_1}{\lambda_1 + \alpha}$. The quantity $\theta (\hat{S}) \in [\sigma(\hat{S}),1]$, and as $\theta \rightarrow 1$, the aggregation operation becomes averaging $b \rightarrow c$. For DPP sampling, we can see an inverse relationship between increasing $\sigma(\hat{S})$ by increasing block size, which inherently makes the parallelization problem more difficult by increasing $\theta(\hat{S})$.

\end{document}